\newcommand{\prob}[1]{\text{Pr}\big\{#1\big\}}
\newcommand{\expect}[1]{\mathbb{E}\big\{#1\big\}}
\newcommand{\bv}[1]{{\boldsymbol{#1} }}
\newcommand{\script}[1]{{{\cal{#1} }}}
\newtheorem{coro}{\textbf{Corollary}}
\newtheorem{theorem}{\textbf{Theorem}}
\newtheorem{definition}{\textbf{Definition}} 
\begin{document} 

\title{When Backpressure Meets Predictive Scheduling} 
\author{\large{Longbo Huang}, \large{Shaoquan Zhang}, \large{Minghua Chen}, \large{Xin Liu} 
\thanks{Longbo Huang  (http://www.iiis.tsinghua.edu.cn/$\sim$huang) is with the Institute for Theoretical Computer Science and the Institute for Interdisciplinary Information Sciences, Tsinghua University, Beijing, P. R. China. } 
\thanks {Shaoquan Zhang and Minghua Chen (\{zsq008, minghua\}@ie.cuhk.edu.hk) are with the Dept. of Information Engineering at the Chinese University of Hong Kong, Shatin, Hong Kong.}
\thanks{Xin Liu (liuxin@microsoft.com) is with Microsoft Research at Asia, Beijing.}
\markboth{Draft}{Huang} 
} 

\maketitle

\begin{abstract}
Motivated by the increasing popularity of learning and predicting human user behavior in communication and computing systems, in this paper, we investigate the fundamental benefit of predictive scheduling, i.e., predicting and pre-serving arrivals, in controlled queueing systems. Based on a lookahead window prediction model, we first establish a novel equivalence between the predictive queueing system  with a \emph{fully-efficient} scheduling scheme and an equivalent queueing system without prediction. This connection allows us to analytically demonstrate that predictive scheduling necessarily improves system delay performance and can drive it to zero with increasing prediction power.  
We then propose the \textsf{Predictive Backpressure (PBP)} algorithm for achieving optimal utility performance in such predictive systems. \textsf{PBP} efficiently  incorporates prediction into stochastic system control and avoids the great complication due to the exponential state space growth in the prediction window size. 
We show that \textsf{PBP} can achieve a utility performance that is within $O(\epsilon)$ of the optimal, for any $\epsilon>0$, while guaranteeing that the system delay distribution is a \emph{shifted-to-the-left} version of that under the original Backpressure algorithm. Hence, the average packet delay under \textsf{PBP} is strictly better than that under Backpressure, and vanishes with increasing prediction window size. This implies that the resulting utility-delay tradeoff with predictive scheduling beats the known optimal $[O(\epsilon), O(\log(1/\epsilon))]$ tradeoff for systems without prediction. 
\end{abstract} 
\begin{keywords}
Prediction, Queueing, Optimal Control, Backpressure
\end{keywords}

\section{Introduction}
Due to the rapid development of the powerful handheld devices, e.g., smartphones or tablet computers, human users now interact much more easily and frequently with the communication and computing infrastructures, e.g., E-commerce websites, cellular  networks, and crowdsourcing platforms. In this case, in order to provide high level quality-of-service, it is important to understand human behavior features and to use the information in guiding system control algorithm design. Therefore, various studies have been conducted to learn and predict human behavior patterns, e.g., online social networking \cite{maia-social-08}, online searching behavior \cite{weber-searching-11}, and online browsing \cite{kumar-browsing-10}. 

In this paper, we take one step further and ask the following important question: \emph{What is the fundamental benefit of having such user-behavior information?} Our objective is to obtain a \emph{theoretical quantification} of this benefit. To mathematically carry out our investigation, we consider a $N$-user single-server queueing system. At every time slot, user workload arriving at the system will first be queued at the corresponding buffer space. Then, the server allocates resources and decides the scheduling for serving the jobs. The action allows the server to serve certain amount of workload for each user, but also results in a system cost. Different from most existing work in multi-queue system scheduling, here we assume that the server  can \emph{predict and serve the future arrivals before they arrive at the system.} Hence, at every time, the server updates his prediction of the future arrivals and adapts his control action. The objective of the system is to serve all the user workload and to ensure small job latency for each user. 

This is an important problem and can be used to model many practical systems where traffic prediction can be performed. The first example is scheduling in cellular networks. In this case,  the base station handles users' data demand. Instead of waiting for the users to submit their requests, and suffering from a potentially big burst of traffic,  which can lead to a large service latency, the base station can ``push'' the information to the users beforehand, e.g., push the news information at $7$am in the morning.  
The second example scenario is prefetching in computing systems, e.g.,  \cite{padmanabhan-96} \cite{lee-2012}.  Here, data or instructions are preloaded into memory before they are actually requested. Doing so enables faster access or execution of the command and enhances system performance. 
Another example of the model is computing program management in, e.g., computers. In this case, each user represents a software application and the server represents a workload management unit. Then, according to the needs of the applications, the managing unit pre-computes some information in case some later applications request them, e.g., branch prediction in computer architecture \cite{ball-93} \cite{farooq-2013}. 
%

There have been many previous works studying multi-queue system scheduling with utility optimization. \cite{berry-energy} studies the fundamental tradeoff between energy consumption and packet delay for a single-queue system. \cite{neelyenergydelay} further extends the result to a downlink system and designs algorithms to achieve the optimal tradeoff. \cite{neelyenergy} designs algorithms for minimizing energy consumption of a stochastic network. \cite{AFu_Opt_energy_ton03} designs energy optimal scheme for satellites. \cite{zafer-energy} looks at the problem of quality-of-service guaranteed energy efficient transmission using a calculus approach. \cite{tan-downlink-09} studies the tradeoff between energy and robustness for downlink systems. \cite{neelysuperfast} and \cite{huangneely_dr_tac} develop algorithms for achieving the optimal utility-delay tradeoff in multihop networks.  

However, we note that all the aforementioned works assume that the system only takes  \emph{causal} scheduling actions, i.e.,  the server will start to serve the packets only after they arrive at the system. While this is  necessary in many systems,  pre-serving future traffic can be done in systems that have highly predictable traffic. 
While predictive scheduling approaches have been investigated, e.g., \cite{farooq-2013}, not much analytical study has been conducted. 
%
Closest to our work are \cite{tadrous-proactive}, \cite{tadrous-proactive-shaping}, which study the benefit of proactive scheduling, 
and \cite{xu-mm1}, which studies the impact of future arrival information on queueing delay in $M/M/1$ queues.
However, we note that \cite{tadrous-proactive} and \cite{tadrous-proactive-shaping} do not consider the effect of queueing, which very commonly appears in communication and computing systems, whereas \cite{xu-mm1} considers a single $M/M/1$  queue without controlled service rates and scheduling. 
Indeed, due to the joint existence of prediction and controlled queueing, the problem considered here is very complicated. Delay problems for controlled queueing systems are known to be hard. On top of that, arrival prediction advances in a sliding-window pattern over time, i.e., at every time, the system can predict slightly further into the future. Designing control algorithms for such systems often  involves dynamic programming (DP). However, since the state space size grows exponentially with the prediction window size, the DP approach may not be computationally practical in this case even for small systems. Even without prediction, the complexity of DP can still be very high due to the large queue state space. 

To resolve the above difficulty, we first establish a novel equivalence between the queueing system under prediction and a class of \emph{fully-efficient} scheduling scheme and a  queueing system without prediction but with a different initial condition and an equivalent scheduling policy. 
This connection is made by carrying out a sample-path queueing argument and enables us to analytically quantify the delay gain due to predictive scheduling for general multi-queue single-server systems. This result shows that for such systems, the packet delay distribution is \emph{shifted-to-the-left} under predictive scheduling. Hence, the average delay necessarily decreases and \emph{approaches zero} as the prediction window size increases. Based on this result, we further propose a low-complexity \textsf{predictive Backpressure (PBP)} scheduling policy for utility maximization in such predictive systems. \textsf{PBP} retains many features of the original Backpressure algorithm, e.g., greedy, does not require statistical information of the system dynamics, and has strong theoretical performance guarantee.   
We prove that the \textsf{PBP} algorithm can achieve an average cost that is $O(\epsilon)$ of the minimum cost for any $\epsilon>0$, while guaranteeing an average delay that is strictly smaller than that under the original Backpressure. Then, for the case when the first-in-first-out (FIFO) queueing discipline is used, we also explicitly show that \textsf{PBP} achieves an average packet delay of $O(1/\epsilon - D)$ (Here $D$ is the prediction window size). This demonstrates the power of predictive scheduling and provides a rigorous quantification of the benefit. 

The rest of the paper is organized as follows. In Section \ref{section:model}, we present our system model and problem formulation. We then review the known results in the Backpressure literature (without prediction)  in Section \ref{section:review}. Then, we develop the \textsf{Predictive Backpressure (PBP)} algorithm in Section \ref{section:pbp}. The analysis of delay performance under general predictive scheduling and \textsf{PBP} is given in Section \ref{section:analysis}. Simulation results are presented in Section \ref{section:simulation}. We conclude our paper in \ref{section:conclusion}.

\section{System Model}\label{section:model}
We consider a general multi-queue single-server system shown in Fig. \ref{fig:multi-queue}. In this system, a server maintains $N$ queues, one for each user that utilizes the service of the server.  
This multi-queue system has many applications. For instance, it can be used to model  downlink transmission  in cellular networks, where the server represents the base station and the users are mobile users. Another example is the task management system of smartphones, where each user represents an application and 
the server represents the operating system  that manages all computing workloads. %
We assume that the system operates in slotted time, i.e., $t\in\{0, 1, ...\}$. 
\begin{figure}[cht]
\centering
\includegraphics[height=1.2in, width=2.4in]{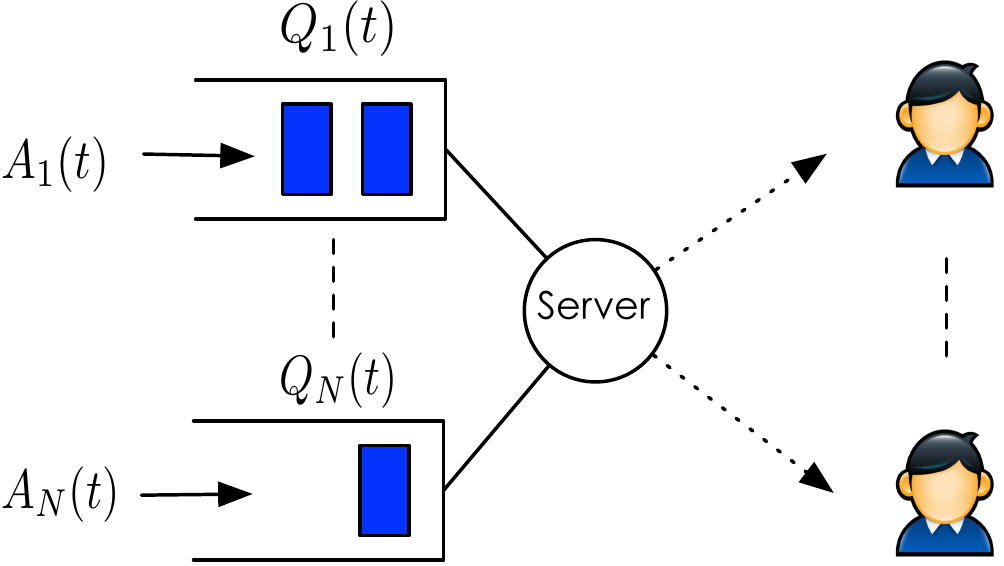}
\vspace{-.05in}
\caption{A multi-queue system where a server is managing workloads for different users/applications.}\label{fig:multi-queue}
\vspace{-.1in}
\end{figure}

\subsection{The traffic model}
We use $A_n(t)$ to denote the amount of new workload arriving to the system at time $t$ (called packets below). Here the workload can represent the newly arrived data units that need to be delivered to their destinations,  
or the new computing tasks that the server must fulfill eventually. 
We use $\bv{A}(t)=(A_1(t), ..., A_N(t))$ to denote the vector of arrivals at time $t$. 
We assume that  $\bv{A}(t)$ is i.i.d. with $\expect{A_n(t)}=\lambda_n$. We also assume that for each $n$, $0\leq A_n(t)\leq A_{\max}$.  

\subsection{Th service rate model} 
Every time slot, the server allocates power for serving the pending packets. \footnote{Note that our results can also be extended to the case where the server also consumes other type of resources, e.g., CPU cycles.} 
However, due to the potential system dynamics, e.g., channel fading coefficient changes, serving different users at different time may result in different resource consumption and generate different service rates. We model this fact by assuming that the server connects to each user $n$ with a time-varying channel, whose state is denoted by $S_n(t)$. We then denote $\bv{S}(t)=(S_1(t), ..., S_N(t))$ the system link state. 
We assume that $\bv{S}(t)$ is i.i.d. and takes values in $\{s_{1}, ..., s_{K} \}$. \footnote{Note that our results can easily be generalized to the case when both the arrivals and the channel conditions are Markovian using the variable-size drift analysis developed in \cite{huangneely_qlamarkovian}.}
We use $\pi_{s_i}$ to denote the probability that $\bv{S}(t) = s_i$. 

The server's power allocation over link $n$ at time $t$ is denoted by $P_n(t)$. We denote the aggregate system power allocation vector by $\bv{P}(t)=(P_1(t), ..., P_N(t))$. Under a system link state $s_i$, we assume that the power allocation vector $\bv{P}(t)$ must be chosen from some feasible power allocation set $\script{P}^{(s_i)}$, which is compact and contains the constraint  $0\leq P_n(t)\leq P_{\max}$. Then, under the given link state $\bv{S}(t)$ and the power allocation vector $\bv{P}(t)$, the amount of backlog that can be served for user $n$ is determined by $\mu_n(t)=\mu_n(\bv{S}(t),\bv{P}(t))$. We assume that $\mu_n(\bv{S}(t),\bv{P}(t))$ is a continuous function of $\bv{P}(t)$ for all $\bv{S}(t)$. Also, we assume that there exists $\mu_{\max}$ such that $\mu_n(\bv{S}(t),\bv{P}(t))\leq\mu_{\max}$ for all time $t$ under any $\bv{S}(t)$ and $\bv{P}(t)$. 
%


\subsection{The predictive service model}
Different from most previous works, we assume that the server can \emph{predict and serve} future packet arrivals. Specifically, we first parameterize our prediction model by a vector $\bv{D}=(D_1, ..., D_N)$, where $D_n$ is the prediction window size of user $n$. That is, at each time $t$, the server has access to the arrival information in the lookahead window $\{A_n(t), ..., A_n(t+D_n-1)\}$, and can allocate rates to serve the future arrivals in the current time slot. \footnote{Since we assume that the arrivals in a time slot can only be served in the next slot, we also consider $A_n(t)$ future arrivals. }
Such a lookahead window model was also used in \cite{xu-mm1} and \cite{lin-igcc-12}. 

We then use $\{\mu_n^{(d)}(t)\}_{d=0}^{D_n-1}$ to denote the rate allocated to serve the arriving packets  in time slot $t+d$ and let $\mu_n^{(-1)}(t)$ to denote the rate allocated for serving the packets that are already in the system. Note that we always have $\sum_{d=-1}^{D_n-1}\mu_n^{(d)}(t)\leq\mu_n(t)$. 
Fig. \ref{fig:slot-prediction} shows the slot structure and the predictive service model. 
\begin{figure}[cht]
\centering
\includegraphics[height=1.2in, width=3.4in]{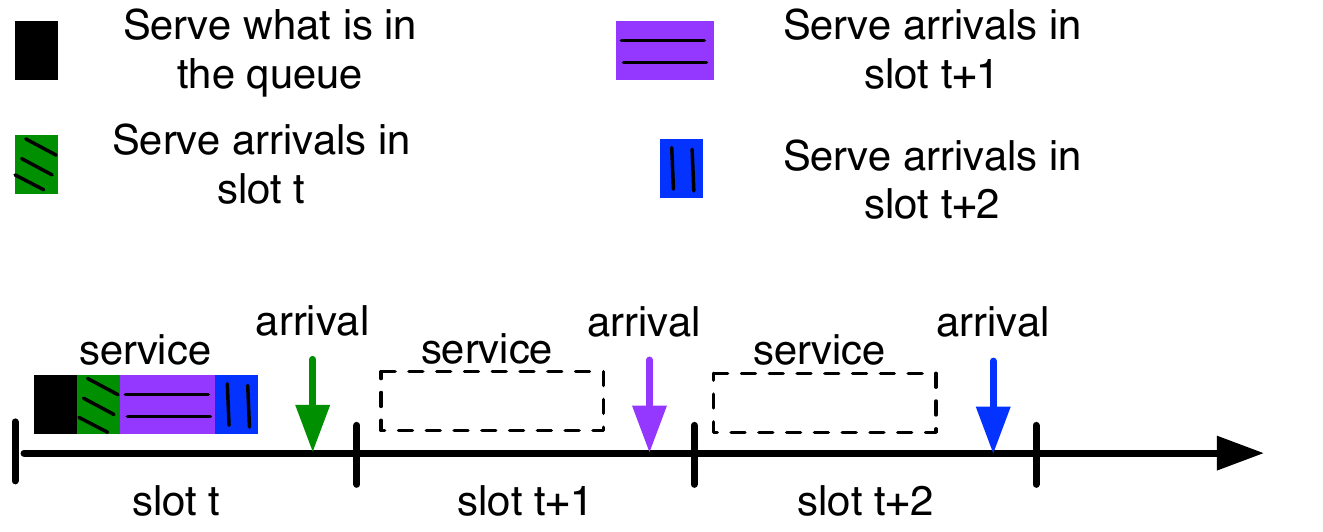}
\vspace{-.05in}
\caption{What happens in a single slot in the case of $D_n=3$. The server predicts the arrivals in slot $t$, $t+1$ and $t+2$. }\label{fig:slot-prediction}
\vspace{-.1in}
\end{figure}

Note that the lookahead window model is an idealized model which assumes  that the system can perfectly predict the future arrivals. Because of this, our results can be viewed as  quantifying the fundamental benefit of predictive scheduling. 

We note that this model is very different from previous controlled queueing system works, which almost all assume that the system only serve the packets in a \emph{causal} manner, i.e., only serve them after they arrive at the system. Our model is  motivated by pre-fetching techniques used in memory management \cite{padmanabhan-96}, branch prediction in computer architecture \cite{ball-93}, as well as recent advancement in data mining for learning user behavior patterns  \cite{kumar-browsing-10}. . 


\subsection{Queueing}
Denote $Q_n(t)$ the number of packets queued at the server for user $n$. We assume the following queueing dynamics: 
\begin{eqnarray}
Q_n(t+1)  = \bigg[Q_n(t) - \mu_n^{(-1)}(t)\bigg]^++A_{n}^{(-1)}(t) . \label{eq:q-dyn}
\end{eqnarray}
Here $A_{n0}(t)$ denotes the number of packets that actually enter the queue after going through a series of predictive service phases, i.e., for all $-1\leq d\leq D_n-2$, 
\begin{eqnarray}
\hspace{-.3in}&&A_{n}^{(d)}(t)  = [A_{n}^{(d+1)}(t) - \mu_n^{(d+1)}(t-d-1)]^+, 
\end{eqnarray}
and $A_{n}^{(D_n-1)}(t)=A_n(t)$. 
%
In this paper, we say that the system is \emph{stable} if the following condition holds: 
\begin{eqnarray}
Q_{\text{av}}\triangleq\limsup_{t\rightarrow\infty}\frac{1}{t}\sum_{\tau=0}^{t-1}\sum_n\expect{Q_n(\tau)} <\infty. \label{eq:stable}
\end{eqnarray}

\subsection{System Objective}
In every time slot, the server spends certain cost due to power expenditure. We denote this cost by $f(\bv{S}(t), \bv{P}(t))$. One simple example is $f(S(t), \bv{P}(t))=\sum_nP_n(t)$, which denotes the total power consumption. We make the mild assumptions that $f(\bv{S}(t), \bv{P}(t))$ is a continuous function of $\bv{P}(t)$ for all $\bv{S}(t)$, and that under any state $\bv{S}(t)$, there exists a constant $f^{\max}$ such that $f(\bv{S}(t), \bv{P}(t))\leq f^{\max}$. The special case when $f(\bv{S}(t), \bv{P}(t))$ is independent of $\bv{P}(t)$ corresponds to the stability scheduling problem \cite{neelynowbook}. 

The system's objective is to find a power allocation and scheduling scheme for minimizing the time average cost, defined as:  
\begin{eqnarray}
f_{\text{av}}\triangleq \limsup_{t\rightarrow\infty}\frac{1}{t}\sum_{\tau=0}^{t-1} \expect{f(\tau)}, 
\end{eqnarray}
subject to the constraint that the queues in the system must be stable, i.e., (\ref{eq:stable}) holds. We then use $f_{\text{av}}^{\bv{D}*}$ to denote the minimum average cost under any feasible predictive scheduling algorithm with prediction vector $\bv{D}$, i.e., those that predict the arrivals for $D_n$ slots and allocates service rates to serving the arrivals within the window $[t, t+D_n-1]$ for each user $n$. We then also use $f_{\text{av}}^{*}$ to denote the minimum average power consumption incurred under any non-predictive scheduling policy. 

\subsection{Discussion of the model}
Our model is most relevant for modeling problems where future workload can be predicted and served before they enter the system. One such application scenario is in cellular networks, where the base station handles users' demand. Since each user typically requests certain news information at specific times, e.g., $7$am in the morning. Instead of waiting for the all the users to submit their requests at the same time, which can lead to a large service latency and high power consumption, one can ``push'' some information to the users beforehand at times when the link condition is good. 

Another important application scenario of the model is computation management in computers or smart mobile devices, e.g., branch prediction  \cite{ball-93}.  In this case, each user represents a software program and the server represents a workload management unit. Then, according to the flow of the program, the managing unit can  pre-execute some instructions in case later computing steps need them. 

Without such predictive control, the cost minimization problem has been extensive studied and algorithms have been proposed, e.g., \cite{neelyenergy}. However, very little is known about the fundamental impact of prediction in system performance, let alone finding optimal control policies for such predictive queueing systems. Moreover, due to the existence of prediction windows and the fact that arrival processes are stochastic, the system naturally evolves according to a Markov chain whose state space size grows exponentially in the prediction window size. Thus, this problem is very challenging to solve. 

\section{Review of Results without Prediction}\label{section:review}
In this section, we first review some known results of the problem and the  known Backpressure algorithm, which does not use prediction, i.e., $D_n=0$ for all $n$. Backpressure has been proven to be a very useful technique for utility maximization problems in stochastic queueing systems \cite{neelynowbook}.
The results in this section will be useful for our later analysis. Readers familiar with the Backpressure literature can skip this section. 

Note that when there is no prediction, $\mu^{(d)}_n(t)=0$ for all $d\geq0$ and $\mu^{(-1)}_n(t)=\mu_n(t)$. The queueing dynamic  equation thus reduces to: 
\begin{eqnarray}
\hspace{-.2in}&&Q_n(t+1) = \big[Q_n(t)  - \mu_n(t)\big]^+ + A_n(t)\,\,\forall\,\,n. \label{eq:reduced-q}
\end{eqnarray}
In this case, the following theorem from \cite{neelyenergy} that characterizes $f_{\text{av}}^{*}$ in the non-predictive case. 
\begin{theorem}
The minimum average cost $f_{\text{av}}^{*}$ is the solution to the following nonlinear optimization problem: 
\begin{eqnarray}
\hspace{-.4in}&&\min: \,\, f_{\text{av}} = \sum_{s_i}\pi_{s_i}\sum_{m=1}^{N+2}a_m^{(s_i)}f(s_i, \bv{P}_m^{(s_i)})\label{eq:opt-noprediction}\\
\hspace{-.4in}&&\,\,\,\text{s.t.} \quad  \sum_{s_i}\pi_{s_i}\sum_{m=1}^{N+2}a_m^{(s_i)}\mu_n(s_i, \bv{P}_m^{(s_i)})\geq \lambda_n, \forall\, n,\label{eq:cond}\\
\hspace{-.4in}  &&\qquad\quad \bv{P}_m^{(s_i)} \in\script{P}^{(s_i)}, \,\forall\, s_i, m,\nonumber\\
\hspace{-.4in}  &&\qquad\quad \sum_{m}a_m^{(s_i)}=1, \, a_m^{(s_i)}\geq0, \, \forall\, s_i, m. \quad\Diamond\nonumber
\end{eqnarray}
\end{theorem}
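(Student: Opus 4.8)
The plan is to prove the two inequalities $f_{\text{av}}^{*}\ge f_{\text{av}}^{\text{opt}}$ and $f_{\text{av}}^{*}\le f_{\text{av}}^{\text{opt}}$ separately, where $f_{\text{av}}^{\text{opt}}$ denotes the optimal value of the program (\ref{eq:opt-noprediction})--(\ref{eq:cond}). The first inequality is a converse: every stabilizing non-predictive policy does at least this well; the second is achievability: a stationary randomized policy (approximately) attains it.

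For the converse, I would take an arbitrary non-predictive policy that stabilizes the system in the sense of (\ref{eq:stable}) and achieves average cost $f_{\text{av}}$. Summing the reduced dynamics (\ref{eq:reduced-q}) over $\tau=0,\dots,t-1$ and using $[x]^{+}\ge x$ gives $Q_n(t)\ge Q_n(0)+\sum_{\tau}(A_n(\tau)-\mu_n(\tau))$; dividing by $t$, taking expectations and letting $t\to\infty$, the left-hand side vanishes because (\ref{eq:stable}) forces $\expect{Q_n(t)}/t\to 0$, so the long-run average service rate $\bar\mu_n$ obeys $\bar\mu_n\ge\lambda_n$. Since $\bv{S}(t)$ is i.i.d.\ with law $\pi_{s_i}$, I would then condition on the link state: along a subsequence realizing the $\limsup$, the empirical distribution of the chosen power vectors restricted to slots with state $s_i$ converges weakly (using compactness of $\script{P}^{(s_i)}$) to some conditional law $\nu^{(s_i)}$, and by continuity of $f(s_i,\cdot)$ and $\mu_n(s_i,\cdot)$ the achieved vector $(f_{\text{av}},\bar\mu_1,\dots,\bar\mu_N)$ equals $\sum_{s_i}\pi_{s_i}\bigl(\int f(s_i,\bv{P})\,d\nu^{(s_i)},\int\mu_1(s_i,\bv{P})\,d\nu^{(s_i)},\dots\bigr)$. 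Each inner integral is a point in the convex hull of $\{(f(s_i,\bv{P}),\mu_1(s_i,\bv{P}),\dots,\mu_N(s_i,\bv{P})):\bv{P}\in\script{P}^{(s_i)}\}\subset\mathbb{R}^{N+1}$, so Carath\'eodory's theorem represents it using at most $N+2$ atoms $\bv{P}_m^{(s_i)}$ with weights $a_m^{(s_i)}$. Hence $(\{a_m^{(s_i)}\},\{\bv{P}_m^{(s_i)}\})$ is feasible for the program, giving $f_{\text{av}}\ge f_{\text{av}}^{\text{opt}}$.

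For achievability I would exhibit the stationary randomized policy induced by an optimizer $(\{a_m^{(s_i)}\},\{\bv{P}_m^{(s_i)}\})$: whenever $\bv{S}(t)=s_i$, independently choose $\bv{P}(t)=\bv{P}_m^{(s_i)}$ with probability $a_m^{(s_i)}$. Then $\expect{f(t)}=f_{\text{av}}^{\text{opt}}$ and $\expect{\mu_n(t)}\ge\lambda_n$. The delicate point — and what I expect to be the main obstacle — is that the throughput constraints (\ref{eq:cond}) may hold with equality, so this policy need not make the queues stable in the strong time-average sense (\ref{eq:stable}). I would resolve this by a perturbation argument: replace $\lambda_n$ by $\lambda_n+\delta$ in (\ref{eq:cond}); a continuity/Slater-type argument shows the program's optimal value is then $f_{\text{av}}^{\text{opt}}+O(\delta)$, while the associated randomized policy serves each queue with rate slack $\delta$, which yields positive recurrence and a bounded mean backlog via a standard drift computation on the quadratic Lyapunov function $\sum_n Q_n^2$. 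Letting $\delta\downarrow 0$ shows the infimum cost over stabilizing policies is at most $f_{\text{av}}^{\text{opt}}$; alternatively, achievability also follows a posteriori from the $O(\epsilon)$-optimality of the Backpressure algorithm recalled in Section~\ref{section:review}.

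Everything else is routine bookkeeping with the i.i.d.\ assumptions and the continuity/compactness hypotheses on $f$, $\mu_n$ and $\script{P}^{(s_i)}$; the only two steps needing genuine care are the weak-limit plus Carath\'eodory reduction in the converse and the equality-case stability issue in the achievability direction.
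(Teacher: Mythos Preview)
The paper does not actually prove this theorem; it is quoted from \cite{neelyenergy} as a known result (see the sentence preceding the statement in Section~\ref{section:review}), so there is no in-paper proof to compare against. That said, your two-sided argument (Carath\'eodory reduction of empirical power-allocation laws for the converse, stationary randomized policies with a slack/perturbation for achievability) is precisely the standard route taken in \cite{neelyenergy}, and the same machinery resurfaces in this paper in Appendix~D when the authors prove the predictive analogue (Theorem~\ref{theorem:opt-remains}); your sketch is correct and aligned with that.
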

The Backpressure algorithm without prediction works as follows  \cite{neelyenergy}: 

\underline{\textsf{Backpressure (BP):}} In every time slot, observe $\bv{Q}(t)=(Q_1(t), ..., Q_N(t))$ and the current channel state vector $\bv{S}(t)$. Do: 
\begin{itemize}
\item Choose the power allocation vector to solve the following problem: 
\begin{eqnarray}
\hspace{-.3in}\min: && V f(\bv{P}(t))  - \sum_nQ_n(t) \mu_n(\bv{S}(t), \bv{P}(t))\\
\hspace{-.3in}\text{s.t.} && \bv{P}(t) \in \script{P}^{\bv{S}(t)}. 
\end{eqnarray}
\item Update the queue sizes according to (\ref{eq:reduced-q}). $\Diamond$
\end{itemize}
Note that the \textsf{BP} algorithm does not require any statistical information of the arrival and the channel states. The performance of \textsf{BP} has been extensitvely analyzed and the following theorem is also from \cite{neelyenergy}. 
\begin{theorem}\label{theorem:bp}
Suppose there exist a set of power vectors and probabilities $\{\bv{P}_m^{(s_i)}, a_m^{(s_i)}\}$, and a constant $\eta>0$, such that: 
\begin{eqnarray}
\lambda_n - \sum_{s_i}\pi_{s_i}\sum_{m=0}^{\infty}a_m^{(s_i)}\mu_n(s_i, \bv{P}_m^{(s_i)}) \leq -\eta, \,\,\forall\,n. \label{eq:slack}
\end{eqnarray}
Then, the \textsf{BP} algorithm with any finite $\bv{Q}(0)$ achieves the following: 
\begin{eqnarray}
f_{\text{av}}^{\textsf{BP}} \leq f_{\text{av}}^{*} +\frac{B}{V},\quad 
Q_{\text{av}}^{\text{BP}} \leq  \frac{B+Vf^{\max}}{\eta}.  
\end{eqnarray}
Here $B = \frac{N}{2}(\mu_{\max}^2 + A_{\max}^2)$ is a constant independent of $V$. $f_{\text{av}}^{\textsf{BP}}$ and $Q_{\text{av}}^{\textsf{BP}}$ denote the average cost  and the average system queue size under Backpressure, respectively. $\Diamond$
\end{theorem}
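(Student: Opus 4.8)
The plan is to invoke the standard Lyapunov drift-plus-penalty argument. First I would introduce the quadratic Lyapunov function $L(\bv{Q}(t))\triangleq\frac{1}{2}\sum_n Q_n(t)^2$ and study the one-slot conditional drift $\Delta(t)\triangleq\expect{L(\bv{Q}(t+1))-L(\bv{Q}(t))\mid\bv{Q}(t)}$. Squaring the reduced queueing dynamics (\ref{eq:reduced-q}), using the elementary inequality $\big([x-\mu]^+ + a\big)^2\le x^2+\mu^2+a^2+2x(a-\mu)$ valid for $x,\mu,a\ge 0$, and then invoking $0\le A_n(t)\le A_{\max}$, $\mu_n(t)\le\mu_{\max}$, and $\expect{A_n(t)\mid\bv{Q}(t)}=\lambda_n$ (since the arrivals are i.i.d.\ and independent of the queue state), I would obtain $\Delta(t)\le B-\sum_n Q_n(t)\big(\expect{\mu_n(t)\mid\bv{Q}(t)}-\lambda_n\big)$ with precisely $B=\frac{N}{2}(\mu_{\max}^2+A_{\max}^2)$. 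Adding the penalty term $V\expect{f(t)\mid\bv{Q}(t)}$ to both sides then gives the drift-plus-penalty bound
\begin{eqnarray}
\Delta(t)+V\expect{f(t)\mid\bv{Q}(t)} &\le& B+\sum_n Q_n(t)\lambda_n \nonumber\\
&& {}+\expect{Vf(\bv{P}(t))-\sum_n Q_n(t)\mu_n(\bv{S}(t),\bv{P}(t))\mid\bv{Q}(t)}.\nonumber
\end{eqnarray}

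The key step is to notice that \textsf{BP} is designed precisely to minimize $Vf(\bv{P}(t))-\sum_n Q_n(t)\mu_n(\bv{S}(t),\bv{P}(t))$ over $\bv{P}(t)\in\script{P}^{\bv{S}(t)}$ for every realization of $\bv{S}(t)$; since $\bv{S}(t)$ is i.i.d.\ and independent of the history, this implies that the conditional expectation of that term under \textsf{BP} is no larger than its value under any other (history-independent, possibly randomized) policy that keeps $\bv{P}(t)$ feasible. I would then compare against two stationary randomized policies whose existence follows from Theorem~1 together with a Carath\'eodory-type argument (this is the source of the $N+2$ support in (\ref{eq:opt-noprediction})): (i) an optimal policy achieving $\expect{f}=f_{\text{av}}^{*}$ and $\expect{\mu_n}\ge\lambda_n$ for every $n$, and (ii) the $\eta$-slack policy guaranteed by assumption (\ref{eq:slack}), achieving $\expect{\mu_n}\ge\lambda_n+\eta$ and $\expect{f}\le f^{\max}$.

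Substituting policy (i) collapses the right-hand side to at most $B+Vf_{\text{av}}^{*}+\sum_n Q_n(t)(\lambda_n-\expect{\mu_n})\le B+Vf_{\text{av}}^{*}$; taking expectations, summing over $t=0,\dots,T-1$, telescoping via $\sum_{t}\expect{\Delta(t)}=\expect{L(\bv{Q}(T))}-L(\bv{Q}(0))\ge -L(\bv{Q}(0))$, dividing by $VT$, and letting $T\to\infty$ (with $\bv{Q}(0)$ finite) yields $f_{\text{av}}^{\textsf{BP}}\le f_{\text{av}}^{*}+B/V$. Substituting policy (ii) instead gives $\Delta(t)+V\expect{f(t)\mid\bv{Q}(t)}\le B+Vf^{\max}-\eta\sum_n Q_n(t)$, hence $\eta\sum_n Q_n(t)\le B+Vf^{\max}-\Delta(t)$ (using $f\ge 0$); the same telescoping-and-averaging argument then delivers $Q_{\text{av}}^{\textsf{BP}}\le(B+Vf^{\max})/\eta$.

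I expect the only genuinely delicate point to be the comparison step: showing rigorously that \textsf{BP}'s per-slot, per-channel-realization greedy choice dominates the conditional expected drift-plus-penalty bound over the entire class of admissible (including randomized) policies, and verifying the existence of the two stationary randomized comparison policies with the stated rate and cost guarantees. Everything else --- the quadratic drift bound and the telescoping sums --- is routine bookkeeping.
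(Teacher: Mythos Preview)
The paper does not actually prove this theorem: it is stated as a known result imported from \cite{neelyenergy}, with no proof given. Your proposal is exactly the standard Lyapunov drift-plus-penalty argument that underlies that reference, and it is correct in all essential points (the quadratic Lyapunov function yielding the constant $B=\frac{N}{2}(\mu_{\max}^2+A_{\max}^2)$, the per-slot greedy optimality of \textsf{BP} over stationary randomized policies, and the two comparisons against the optimal and the $\eta$-slack randomized policies followed by telescoping). The only minor implicit assumption you rely on is $f\ge 0$ when dropping the penalty term for the queue bound; this is standard in the Backpressure literature and consistent with the cost interpretation here.
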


The condition (\ref{eq:slack}) is important. It is known that having such an $\eta>0$ is a sufficient condition for stability, and it is necessary to have $\eta\geq0$ \cite{neelynowbook}. 
Throughout this paper, we assume that (\ref{eq:slack}) holds with $\eta>0$.

\section{Predictive Backpressure}\label{section:pbp}
In this section, we describe how prediction can be incorporated into Backpressure and achieve significant delay improvement. 
Since the future arrival information is made available in a sliding-window form,  prediction couples the current action with the future arrivals in every time slot. This prohibits the use of frame-based Lyapunov technique \cite{huangneely_qlamarkovian},  
and makes the problem very challenging. However, as we will see, with the development of a novel sample-path queueing equivalence result, one can incorporate prediction into system control in a very clean manner. 

\vspace{-.1in}
\subsection{Prediction Queues}
To facilitate our analysis, we first introduce the notion of a prediction queue, which records the number of residual arrivals in every slot in $[t, t+D_n-1]$. 
Specifically, we denote  $\{Q_n^{(d)}(t)\}_{d=0}^{D_n-1}$ the number of remaining arrivals currently in future slot $t+d$, i.e., $d$ slots into the future, and denote $Q_n^{(-1)}(t)$ the number of packets in the system. We see that they evolve according to the following dynamics: 
\begin{enumerate}
\item If $d=D_n-1$, then: 
\begin{eqnarray}
\hspace{-.3in}Q_n^{(d)}(t+1) =  A_n(t+D_n). \label{eq:q-1}
\end{eqnarray}
\item Now if $0\leq d\leq D_n-2$, then: 
\begin{eqnarray}
\hspace{-.3in}Q_n^{(d)}(t+1) = \big[Q_n^{(d+1)}(t) - \mu_n^{(d)}(t)\big]^+. \label{eq:q-2}
\end{eqnarray}
\item For $Q_n^{(-1)}(t)$, we have: 
\begin{eqnarray}
\hspace{-.5in}&&Q_n^{(-1)}(t+1) \label{eq:q-3}\\
\hspace{-.5in}&&\qquad\quad = \bigg[Q_n^{(-1)}(t)- \mu_n^{(-1)}(t)\bigg]^+ +\big[Q_n^{(0)}(t) - \mu_n^{(0)}(t)\big]^+, \nonumber
\end{eqnarray}
with $Q_n^{(-1)}(0)=0$. 
\end{enumerate}
Fig. \ref{fig:queue} shows the definition of the prediction queues. One can see that the queues $Q_n^{(d)}(t)$ for $0\leq d\leq D_n-1$ are not real queues. They simply record the residual arrivals going through the timeline, whereas $Q^{(-1)}_n(t)$ records the true backlog in the system. Notice that $Q^{(-1)}_n(t)$ is exactly the same as $Q_n(t)$ in (\ref{eq:q-dyn}).  Since $Q^{(-1)}_n(t)$ is the only actual queue,  the system is stable if and only if $Q_n^{(-1)}(t)$ is stable. 
\begin{figure}[cht]
\centering
\includegraphics[height=0.7in, width=3.4in]{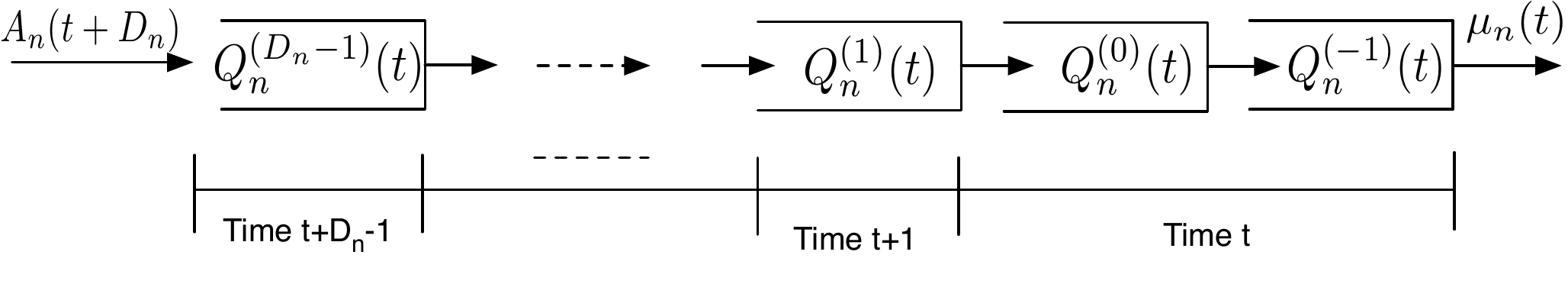}
\caption{The prediction queues that describe the system evolution.}\label{fig:queue}
\end{figure}


\vspace{-.1in}
\subsection{Backpressure with Prediction Queues}
Here we construct our algorithm based on the above prediction queues. The main idea is to use the sum of all the queues $Q_n^{\text{sum}}\triangleq\sum_{d=-1}^{D_n-1}Q_n^{(d)}(t)$ for decision making. 

To describe the algorithm in details, we also define the notion of queueing discipline for the predictive system, i.e., how to select packets to serve from $\{Q_n^{(d)}(t)\}_{d=-1}^{D_n-1}$. Specifically, we first order the packets in $Q_n^{(d)}(t)$ with labels $p_{\sum_{d'<d}Q_n^{d'}(t)+1}, ..., p_{\sum_{d'\leq d}Q_n^{d'}(t)}$. Then, all the packets in $\{Q_n^{(d)}(t)\}_{d=-1}^{D_n-1}$ are ordered from $p_1$ to $p_{Q_n^{\text{sum}}(t)}$. 
When a particular queueing discipline is applied in the predictive system, we select packets to serve according to the discipline using the order of the packets. For instance, if FIFO is used, then the server will serve the packets $p_1, ..., p_{\min[\mu^{(-1)}_n(t), Q_n^{(-1)}(t)]}$ from queue $Q_n^{(-1)}(t)$ and similarly for other queues. We now also define the notion of a \emph{fully-efficient} predictive scheduling policy.  
\begin{definition} 
A predictive scheduling policy is called  \emph{fully-efficient} if for every user $n$, we have: (i) $\sum_d\mu_n^{(d)}(t)=\mu_n(t)$, and (ii) whenever there exists a $d$ such that: 
\begin{eqnarray}
\mu_n^{(d)}(t)> Q_n^{(d)}(t),  
\end{eqnarray}
then, 
\begin{eqnarray}
\mu_n^{d'}(t) \geq Q_n^{d'}(t), \,\,\forall\, d'\neq d.\quad \Diamond
\end{eqnarray}
\end{definition}
In other words, if a policy is  fully-efficient, it will always try to utilize all the service opportunities and not allocate more service rate to serve any  queue unless all other queues are already fully served. \footnote{Note that it is equivalent to work-conserving in queue scheduling.} Hence, it will not waste any service opportunity unless there are more. With this definition, we now present our algorithm. 

\underline{\textsf{Predictive Backpressure (PBP):}} In every time slot, compute $Q_n^{\text{sum}}(t) = \sum_{d=-1}^{D_n-1}Q_n^{(d)}(t)$ for all $n$. Then, observe the current channel state vector $\bv{S}(t)$ and perform: 
\begin{itemize} 
\item Choose the power allocation vector $\bv{P}(t)$ to solve the following problem: 
\begin{eqnarray}
\hspace{-.3in}\min: && V f(\bv{P}(t))  - \sum_nQ_n^{\text{sum}}(t) \mu_n(\bv{S}(t), \bv{P}(t))\\
\hspace{-.3in}\text{s.t.} && \bv{P}(t) \in \script{P}^{\bv{S}(t)}. 
\end{eqnarray}
Then, allocate the service rates $\{\mu_n^{(d)}(t)\}_{d=-1}^{D_n-1}$ to the queues in a fully-efficient manner  according to any pre-specified queueing discipline.  
\item Update the queues according to (\ref{eq:q-1}),  (\ref{eq:q-2}) and  (\ref{eq:q-3}). $\Diamond$
\end{itemize}
Notice that the \textsf{PBP} algorithm has a very clean format and looks very similar to the original Backpressure algorithm. We will show that \textsf{PBP} dramatically reduces the queueing delay compared to the original Backpressure algorithm. 


\section{Performance Analysis}\label{section:analysis}
In this section, we analyze the performance of the \textsf{PBP} algorithm. 
We will first present an important theorem which states that if a predictive scheduling policy is fully-efficient, then the queueing system under the scheme evolves in the  exact  same way  as a non-predictive queueing system with delayed arrivals. Using this result, we obtain an interesting delay distribution shifting theorem. 
After that, we present our delay analysis for the \textsf{PBP} algorithm. 

\subsection{Performance of fully-efficient scheduling policies}
We start by presenting the following  theorem regarding the equivalence between predictive and non-predictive systems. 
\begin{theorem}\label{theorem:sample-path}
Let  $\hat{Q}_n(t)$ be the queue size of a single queue system that (i) has $\hat{Q}_n(0)=\sum_{t=0}^{D_n-1}A_n(t)$, (ii) uses no prediction, (iii) has arrival $\hat{A}_n(t)=A_n(t+D_n)$, (iv) has service $\hat{\mu}_n(t)=\sum_{d=-1}^{D_n-1}\mu_n^{(d)}(t)$, and (v) evolves according to: 
\begin{eqnarray}
\hspace{-.4in}&&\hat{Q}_n(t+1) = \bigg[\hat{Q}_n(t) - \hat{\mu}_n(t) \bigg]^++ \hat{A}_n(t). \label{eq:q-4}
\end{eqnarray}
Then, if the predictive system uses a fully-efficient predictive scheduling policy (with any queueing discipline), we have for all $t$ that: 
\begin{eqnarray}
Q_n^{\text{sum}}(t) = \hat{Q}_n(t), \,\,\,\forall\,\,n. \quad\Diamond. \label{eq:q-equal-thm}
\end{eqnarray}
\end{theorem}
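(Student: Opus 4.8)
The plan is to prove the identity $Q_n^{\text{sum}}(t) = \hat{Q}_n(t)$ by induction on $t$, treating each user $n$ separately since neither side couples across users. The base case $t=0$ is immediate: on the predictive side, $Q_n^{\text{sum}}(0) = \sum_{d=-1}^{D_n-1}Q_n^{(d)}(0) = 0 + \sum_{d=0}^{D_n-1}A_n(d)$ (using $Q_n^{(-1)}(0)=0$ and the fact that the prediction queue $Q_n^{(d)}(0)$ is initialized to the full future arrival $A_n(d)$), which equals $\hat{Q}_n(0)$ by construction (i). The work is in the inductive step: assuming $Q_n^{\text{sum}}(t)=\hat{Q}_n(t)$, show $Q_n^{\text{sum}}(t+1)=\hat{Q}_n(t+1)$.

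For the inductive step, I would sum the three prediction-queue dynamics \eqref{eq:q-1}, \eqref{eq:q-2}, \eqref{eq:q-3} over $d=-1,\dots,D_n-1$. The telescoping structure is: the $d=D_n-1$ equation injects $A_n(t+D_n)=\hat{A}_n(t)$; each $d$ with $0\le d\le D_n-2$ carries $[Q_n^{(d+1)}(t)-\mu_n^{(d)}(t)]^+$ into slot $d$; and the $d=-1$ equation absorbs $[Q_n^{(0)}(t)-\mu_n^{(0)}(t)]^+$ plus $[Q_n^{(-1)}(t)-\mu_n^{(-1)}(t)]^+$. So
\begin{eqnarray}
Q_n^{\text{sum}}(t+1) = \hat{A}_n(t) + \sum_{d=-1}^{D_n-1}\big[Q_n^{(d)}(t) - \mu_n^{(d)}(t)\big]^+.\nonumber
\end{eqnarray}
The target is $\hat{Q}_n(t+1) = [\hat{Q}_n(t) - \hat{\mu}_n(t)]^+ + \hat{A}_n(t) = [\,\sum_{d}Q_n^{(d)}(t) - \sum_d\mu_n^{(d)}(t)\,]^+ + \hat{A}_n(t)$, using the inductive hypothesis and $\hat\mu_n(t)=\sum_d\mu_n^{(d)}(t)$. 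So it suffices to show the purely algebraic identity
\begin{eqnarray}
\sum_{d=-1}^{D_n-1}\big[Q_n^{(d)}(t) - \mu_n^{(d)}(t)\big]^+ = \Big[\sum_{d=-1}^{D_n-1}\big(Q_n^{(d)}(t) - \mu_n^{(d)}(t)\big)\Big]^+.\nonumber
\end{eqnarray}

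This identity is exactly where the \emph{fully-efficient} hypothesis enters, and it is the main obstacle. In general $\sum_d [x_d]^+ \ge [\sum_d x_d]^+$ with equality failing whenever two coordinates have strictly opposite signs. But full-efficiency says: if some queue is over-served ($\mu_n^{(d)}(t) > Q_n^{(d)}(t)$, i.e.\ $x_d<0$), then every other queue is under-served or exactly served ($\mu_n^{(d')}(t)\le Q_n^{(d')}(t)$, i.e.\ $x_{d'}\ge 0$) — combined with property (i), $\sum_d\mu_n^{(d)}(t)=\mu_n(t)$, meaning service is never left idle. I would argue by cases: (a) if no $d$ has $x_d<0$, every term is nonnegative, both sides equal $\sum_d x_d$; (b) if some $d^\star$ has $x_{d^\star}<0$, then all other $x_{d'}\ge0$, and moreover $\sum_d\mu_n^{(d)}(t)=\mu_n(t)\le\sum_d Q_n^{(d)}(t)$ would be needed — but actually one must be slightly careful: over-serving one queue while all others are exactly met can still make $\sum_d x_d<0$, in which case the left side is $0+\sum_{d'\ne d^\star}0 = \dots$; here the key sub-claim is that at most one coordinate is negative and when one is negative all the rest are zero (not merely nonnegative), so the left side is $0$ and the right side is $[\text{negative or zero}]^+=0$ as well. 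I would nail down this sub-claim carefully from the definition — that whenever any queue is strictly over-served, all others are served to exactly zero backlog — and that is the crux of the argument; the rest is bookkeeping.
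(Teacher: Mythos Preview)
Your overall strategy---induction on $t$, summing the prediction-queue dynamics, and reducing the inductive step to the algebraic identity
\[
\sum_{d=-1}^{D_n-1}\big[Q_n^{(d)}(t)-\mu_n^{(d)}(t)\big]^+ \;=\; \Big[\sum_{d=-1}^{D_n-1}\big(Q_n^{(d)}(t)-\mu_n^{(d)}(t)\big)\Big]^+
\]
---is correct and is essentially the paper's argument recast algebraically (the paper counts packets served and splits into the same two cases $\mu_n(k)\le \hat{Q}_n(k)$ versus $\mu_n(k)>\hat{Q}_n(k)$).

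However, your case (b) analysis rests on a misreading of the \emph{fully-efficient} definition. You write that if some queue is over-served then ``every other queue is under-served or exactly served ($\mu_n^{(d')}(t)\le Q_n^{(d')}(t)$).'' The definition says the opposite: if $\mu_n^{(d)}(t)>Q_n^{(d)}(t)$ for some $d$, then $\mu_n^{(d')}(t)\ge Q_n^{(d')}(t)$ for \emph{all} $d'\ne d$. In your notation $x_d=Q_n^{(d)}(t)-\mu_n^{(d)}(t)$, this means: if any $x_{d^\star}<0$, then every $x_{d'}\le 0$. Hence in case (b) \emph{all} terms satisfy $x_d\le 0$, so $\sum_d[x_d]^+=0$ and $[\sum_d x_d]^+=0$, and the identity is immediate. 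Your ``key sub-claim'' (at most one coordinate negative, the others exactly zero) is both incorrect as stated and unnecessary; with the inequality read in the right direction, case (b) collapses to one line. Once you make this correction, your proof is complete and matches the paper's.
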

\begin{proof}
See Appendix A. 
\end{proof}
Theorem \ref{theorem:sample-path} provides an important connection between the predictive system and the system without prediction. Using this result, we derive the following theorem, which relates the delay distribution of the predictive system to the equivalent system without prediction. 
%
\begin{theorem} (Delay Distribution Shifting)\label{theorem:dist-shift} 
Denote $\pi^{(D_n)}_{nk}$ the steady-state probability that a user $n$ packet experiences a delay of $k$ slots  under the a fully-efficient predictive scheduling policy in the predictive system, and let $\hat{\pi}_{nk}$ denote the steady-state probability that a user $n$ packet experiences a $k$-slot delay in $\hat{Q}_n(t)$. Suppose the set of queues $\{Q_n^{(d)}(t)\}_{d=-1}^{D_n-1}$ and $\hat{Q}_n(t)$ both use the same queueing discipline. Then, for each queue $n$, we have: 
\begin{eqnarray}
\pi^{(D_n)}_{n0} &=& \sum_{k=0}^{D_n}\hat{\pi}_{nk}, \\
\pi^{(D_n)}_{nk} &=& \hat{\pi}_{nk+D_n}, \,\,k\geq 1.  
\end{eqnarray}
That is, the distribution of the original queue can be viewed as \emph{shifted to the left by $D_n$ slots} under predictive scheduling with $D_n$-slot prediction. $\Diamond$ 
\end{theorem}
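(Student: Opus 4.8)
The plan is to promote the sample-path identity of Theorem~\ref{theorem:sample-path}, which only equates the \emph{aggregate} backlogs $Q_n^{\text{sum}}(t)=\hat Q_n(t)$, to a \emph{packet-by-packet} correspondence between the two systems, and then to read off the distributional claim by comparing, for a fixed physical packet, its departure slot (the same in both systems) against its two different arrival epochs, which are offset by exactly $D_n$.

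First I would fix a user $n$ and track individual packets. A physical packet belonging to batch $A_n(\tau)$ has \emph{true} arrival epoch $\tau$; in the equivalent queue $\hat Q_n$ this same packet is, for $\tau\ge D_n$, injected at slot $\tau-D_n$ (since $\hat A_n(t)=A_n(t+D_n)$), whereas the batches $A_n(0),\dots,A_n(D_n-1)$ constitute the initial backlog $\hat Q_n(0)$ and may be disregarded as a finite transient. In the predictive system the same packet first appears in $Q_n^{(D_n-1)}$ and then descends through $Q_n^{(D_n-2)},\dots,Q_n^{(0)},Q_n^{(-1)}$. The crucial step is to show that, when the prediction queues and $\hat Q_n$ run the \emph{same} queueing discipline, every physical packet is fully served in the \emph{same} slot in the two systems. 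I would prove this by strengthening the induction that underlies Theorem~\ref{theorem:sample-path}: I would carry along the hypothesis that, in addition to $Q_n^{\text{sum}}(t)=\hat Q_n(t)$, the ordered list obtained by concatenating the contents of $Q_n^{(-1)}(t),Q_n^{(0)}(t),\dots,Q_n^{(D_n-1)}(t)$ (using the packet labelling $p_1,\dots,p_{Q_n^{\text{sum}}(t)}$ introduced in Section~\ref{section:pbp}) coincides, packet for packet, with the discipline-ordered list of $\hat Q_n(t)$. The fully-efficient (work-conserving) property is exactly what makes the induction step close: it forces $\hat\mu_n(t)=\sum_{d}\mu_n^{(d)}(t)=\mu_n(t)$ and guarantees that, within a slot, the split service $\{\mu_n^{(d)}(t)\}_d$ removes precisely the same sub-collection of the ordered list as the lump service $\hat\mu_n(t)$ removes in $\hat Q_n$, so that order and contents are preserved and departure slots agree.

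Granting this correspondence, the remainder is arithmetic. Let a physical packet of batch $A_n(\tau)$ with $\tau\ge D_n$ have common departure slot $d^{\star}$. Its delay in $\hat Q_n$ is $\hat k=d^{\star}-(\tau-D_n)$, whereas its delay \emph{actually experienced} in the predictive system is $d^{\star}-\tau=\hat k-D_n$ when this is positive and $0$ otherwise, because a packet whose service completes no later than its physical arrival incurs no queueing delay. Hence the experienced delay equals $\max\{\hat k-D_n,0\}$: a $\hat Q_n$-delay of $0,1,\dots,D_n$ all maps to experienced delay $0$, while a $\hat Q_n$-delay of $k+D_n$ maps to experienced delay $k$ for $k\ge1$. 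Invoking the existence of the steady-state delay distributions (as in the theorem statement), together with the fact that $\hat A_n(t)=A_n(t+D_n)$ is again i.i.d.\ so that the long-run empirical delay frequencies are well defined and unaffected by the finite initial backlog, and pushing the limiting $\hat Q_n$-delay distribution through the map $\hat k\mapsto\max\{\hat k-D_n,0\}$, yields $\pi^{(D_n)}_{n0}=\sum_{k=0}^{D_n}\hat\pi_{nk}$ and $\pi^{(D_n)}_{nk}=\hat\pi_{n,k+D_n}$ for $k\ge1$.

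The main obstacle is the packet-ordering refinement of Theorem~\ref{theorem:sample-path}: one must verify that concatenating the prediction sub-queues in the order $Q_n^{(-1)},Q_n^{(0)},\dots,Q_n^{(D_n-1)}$ reproduces exactly the discipline order of the single queue $\hat Q_n$, and that one round of fully-efficient service respects this order in both systems simultaneously, even though it is administered as the $D_n+1$ separate amounts $\{\mu_n^{(d)}(t)\}$ on one side and as a single amount $\hat\mu_n(t)$ on the other. For FIFO the intuition is transparent (a smaller index $d$ corresponds to an earlier true arrival, hence a position nearer the head), but the claim must be argued uniformly over arbitrary disciplines and tracked carefully \emph{within} a slot, where the fully-efficiency requirement --- serve every available packet before over-allocating rate to any one sub-queue --- is precisely the hypothesis that reconciles the two service mechanisms. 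Everything downstream of that point is the elementary computation above plus a routine ergodic-averaging argument.
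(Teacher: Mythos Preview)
Your proposal is correct and follows essentially the same route as the paper: invoke the sample-path identity $Q_n^{\text{sum}}(t)=\hat Q_n(t)$, argue that under a common queueing discipline each packet departs in the same slot in both systems, and then observe that a packet's physical arrival in the predictive system lags its $\hat Q_n$-arrival by exactly $D_n$, so its experienced delay is $[\hat k-D_n]^+$. The paper's own proof is much terser---it simply asserts the packet-level correspondence and the $D_n$-slot offset---whereas you correctly flag and outline the strengthened induction (carrying the ordered concatenated packet list) that is needed to justify that assertion; your treatment is therefore more careful than, but not different in spirit from, the paper's argument.
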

\begin{proof}
See Appendix B. 
\end{proof}
Note that Theorem \ref{theorem:dist-shift} is very important to the general framework of predictive scheduling. It allows us to compare scheduling with prediction to the original queueing system. 
To formalize this,  first notice that if we start with $\hat{Q}_n(0)=0$ and have $\hat{A}_n(t)=A_n(t)$, then $\hat{Q}_n(t)$ becomes exactly the same as the queueing process in the \emph{original} system \emph{without} prediction. Thus, if the steady-state behavior of  $\hat{Q}_n(t)$ does not depend on the initial condition and the shift of the arrival process, then the delay performance of the predictive system heavily depends on the delay distribution of the original system without prediction. 
%
\begin{coro}\label{coro:dist-shift} 
Suppose $\{Q_n^{(d)}(t)\}_{d=-1}^{D_n-1}$ and $\hat{Q}_n(t)$ use the same queueing discipline. For any arrival and service processes under which the delay distribution of $\hat{Q}_n(t)$ does not depend on $\hat{Q}_n(0)$ and the delay in the arrival process, we have: 
\begin{eqnarray}
\pi_{n0}^{(D_n)} &=& \sum_{k=0}^{D_n}\pi_{nk}, \\
\pi_{nk}^{(D_n)} &=& \pi_{nk+D_n}, \,\,k\geq 1. 
\end{eqnarray}
Here $\pi_{nk}$ is the steady-state probability that a user $n$ packet experiences a delay of $k$ slots in our system with $D_n=0$.  $\Diamond$ 
\end{coro}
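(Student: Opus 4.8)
The plan is to derive Corollary \ref{coro:dist-shift} directly from Theorem \ref{theorem:dist-shift} by removing the dependence on the specific initial condition and arrival shift that appear in the equivalent system $\hat{Q}_n(t)$. Recall that Theorem \ref{theorem:dist-shift} already gives
\begin{eqnarray}
\pi^{(D_n)}_{n0} = \sum_{k=0}^{D_n}\hat{\pi}_{nk}, \qquad \pi^{(D_n)}_{nk} = \hat{\pi}_{nk+D_n}, \,\,k\geq 1, \nonumber
\end{eqnarray}
where $\hat{\pi}_{nk}$ is the steady-state delay distribution in the auxiliary queue $\hat{Q}_n(t)$ that starts from $\hat{Q}_n(0)=\sum_{t=0}^{D_n-1}A_n(t)$ and is fed the shifted arrival process $\hat{A}_n(t)=A_n(t+D_n)$. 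The only gap between this and the claimed statement is that the corollary wants the right-hand sides expressed in terms of $\pi_{nk}$, the delay distribution of the genuine original ($D_n=0$) system, which starts empty and sees the un-shifted arrivals.

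First I would invoke the hypothesis of the corollary: we are told precisely that, for the arrival and service processes under consideration, the steady-state delay distribution of $\hat{Q}_n(t)$ is insensitive both to its initial backlog $\hat{Q}_n(0)$ and to a finite time shift of the arrival process. Applying this insensitivity with the particular choices $\hat{Q}_n(0)=0$ and $\hat{A}_n(t)=A_n(t)$ — i.e., exactly the configuration of the original non-predictive system, as noted in the paragraph following Theorem \ref{theorem:dist-shift} — we conclude $\hat{\pi}_{nk}=\pi_{nk}$ for every $k\geq 0$. Substituting this identity into the two displayed equations of Theorem \ref{theorem:dist-shift} yields
\begin{eqnarray}
\pi_{n0}^{(D_n)} = \sum_{k=0}^{D_n}\pi_{nk}, \qquad \pi_{nk}^{(D_n)} = \pi_{nk+D_n}, \,\,k\geq 1, \nonumber
\end{eqnarray}
which is exactly the corollary.

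The one point that needs a little care, and which I regard as the main (though modest) obstacle, is making sure the two invocations of Theorem \ref{theorem:dist-shift} and of the insensitivity hypothesis are for the \emph{same} service process. In Theorem \ref{theorem:dist-shift} the auxiliary service is $\hat{\mu}_n(t)=\sum_{d=-1}^{D_n-1}\mu_n^{(d)}(t)$, the aggregate rate produced by the fully-efficient predictive policy; when we reset $\hat{Q}_n(0)=0$ and un-shift the arrivals we must keep this same service process (now viewed as a process in the original system with $D_n=0$), so that the "original system" referenced in the corollary is the one running that induced service process rather than some unrelated policy. I would state this explicitly: the corollary compares the predictive policy against the non-predictive policy that applies the identical aggregate service decisions, and the insensitivity assumption is exactly what guarantees the steady-state comparison is valid regardless of the transient caused by the nonzero initial backlog and the $D_n$-slot arrival offset. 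With that clarification the proof is a one-line substitution, so I would keep it short.
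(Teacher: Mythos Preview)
Your proposal is correct and matches the paper's own reasoning: the paper does not give a separate formal proof of this corollary, but the paragraph immediately preceding it spells out exactly your argument---observe that with $\hat{Q}_n(0)=0$ and $\hat{A}_n(t)=A_n(t)$ the auxiliary system coincides with the original non-predictive system, invoke the assumed insensitivity to initial condition and arrival shift to get $\hat{\pi}_{nk}=\pi_{nk}$, and substitute into Theorem~\ref{theorem:dist-shift}. Your extra care about keeping the same aggregate service process is a valid clarification that the paper leaves implicit.
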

Note that Corollary \ref{coro:dist-shift} applies to general multi-queue single-server systems where the steady-state behavior depends only on the statistical behavior of the arrivals. 

With Theorem \ref{theorem:dist-shift}, we can now quantify how much delay improvement  one obtains via predictive scheduling. 
This is summarized in the following theorem, in which we use $W_{\text{tot}}$ to denote the average delay of the original system without prediction, i.e., 
\begin{eqnarray}
W_{\text{tot}} = \frac{\sum_{n=1}^N\lambda_n \sum_{k\geq0}k\pi_{nk}}{\sum_{n=1}^N\lambda_n }. \label{eq:w-original}
\end{eqnarray}

\begin{theorem}\label{theorem:delay-red}
Suppose the conditions in Corollary \ref{coro:dist-shift} hold. The delay reduction offered by predictive scheduling with prediction window vector $\bv{D}$, denoted by $R(\bv{D})$, is given by: 
\begin{eqnarray}
\hspace{-.1in}R(\bv{D})= \frac{\sum_{n=1}^N\lambda_n\big(  \sum_{1\leq k\leq D_n} k \pi_{nk} + D_n \sum_{k\geq1} \pi_{nk+D_n}  \big)}{ \sum_{n=1}^N\lambda_n }. \label{eq:delay-reduction}
\end{eqnarray}
In particular, if $W_{\text{tot}}<\infty$, the average system delay goes to zero as $D_n$ goes to infinity for all queue $n$, i.e., 
\begin{eqnarray}
\lim_{\bv{D}\rightarrow\infty}\sum_{n}R(\bv{D}) = W_{\text{tot}}.  \label{eq:limit-r}
\end{eqnarray}
Here $\bv{D}\rightarrow\infty$ means that $D_n\rightarrow\infty$ for all $n$. $\Diamond$  
\end{theorem}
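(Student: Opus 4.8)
The plan is to work directly from Corollary~\ref{coro:dist-shift}, which already expresses the steady-state delay distribution $\{\pi_{nk}^{(D_n)}\}$ of the predictive system in terms of the no-prediction distribution $\{\pi_{nk}\}$, so essentially all the queueing content has been absorbed into Theorems~\ref{theorem:sample-path}--\ref{theorem:dist-shift} and their corollary. Take $R(\bv{D})$ to be the packet-averaged difference between the original mean delay $W_{\text{tot}}$ of (\ref{eq:w-original}) and the mean delay of the predictive system, i.e. $R(\bv{D}) = \big(\sum_n\lambda_n W_n - \sum_n\lambda_n W_n^{(D_n)}\big)\big/\sum_n\lambda_n$, where $W_n=\sum_{k\geq0}k\pi_{nk}$ and $W_n^{(D_n)}=\sum_{k\geq0}k\pi_{nk}^{(D_n)}$. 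The first step is to compute $W_n^{(D_n)}$ from the shift relations: the mass $\pi_{n0}^{(D_n)}$ contributes nothing to the mean, and $\pi_{nk}^{(D_n)}=\pi_{n,k+D_n}$ for $k\geq1$, so the change of index $j=k+D_n$ gives $W_n^{(D_n)}=\sum_{j\geq D_n+1}(j-D_n)\pi_{nj}$.

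The second step is a bookkeeping manipulation. Subtract this from $W_n=\sum_{k\geq1}k\pi_{nk}$, split the latter at $k=D_n$, and note that the tail $\sum_{k\geq D_n+1}k\pi_{nk}$ cancels against $\sum_{j\geq D_n+1}(j-D_n)\pi_{nj}$ down to $\sum_{k\geq D_n+1}D_n\pi_{nk}$; re-indexing this last tail as $D_n\sum_{k\geq1}\pi_{n,k+D_n}$ yields $W_n-W_n^{(D_n)}=\sum_{1\leq k\leq D_n}k\pi_{nk}+D_n\sum_{k\geq1}\pi_{n,k+D_n}$. Substituting into the definition of $R(\bv{D})$ gives exactly (\ref{eq:delay-reduction}). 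This part is routine once the distribution identities are in hand.

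For the limit (\ref{eq:limit-r}), the idea is that $W_n^{(D_n)}\to0$ as $D_n\to\infty$: from the closed form, $W_n^{(D_n)}=\sum_{j\geq D_n+1}(j-D_n)\pi_{nj}\leq\sum_{j\geq D_n+1}j\pi_{nj}$, which is the tail of the series $W_n=\sum_j j\pi_{nj}$. Finiteness of $W_{\text{tot}}$ forces $\sum_n\lambda_n W_n<\infty$, hence $W_n<\infty$ for every queue with $\lambda_n>0$, so this tail vanishes. Therefore $R(\bv{D})=\sum_n\lambda_n(W_n-W_n^{(D_n)})/\sum_n\lambda_n\to\sum_n\lambda_n W_n/\sum_n\lambda_n=W_{\text{tot}}$, which is the claimed limit (reading the displayed ``$\sum_n$'' in (\ref{eq:limit-r}) as the arrival-weighted aggregation already folded into $R(\bv{D})$).

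I do not expect a genuine obstacle: Corollary~\ref{coro:dist-shift} does the heavy lifting, and what remains is index manipulation plus a dominated-tail argument. The one point needing care is justifying the rearrangement of the infinite sums when forming $W_n-W_n^{(D_n)}$ — one should first establish $W_n<\infty$ (from $W_{\text{tot}}<\infty$) before collapsing $\sum_{k\geq1}k\pi_{nk}-\sum_{j\geq D_n+1}(j-D_n)\pi_{nj}$ into a single nonnegative series. If $W_{\text{tot}}=\infty$, the termwise identity (\ref{eq:delay-reduction}) still holds, but the limit statement must be read with that caveat.
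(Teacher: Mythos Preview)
Your proposal is correct and follows essentially the same route as the paper: both arguments read off the predictive mean delay from the shift relations in Corollary~\ref{coro:dist-shift}, subtract to obtain the per-queue reduction $\sum_{1\leq k\leq D_n}k\pi_{nk}+D_n\sum_{k\geq1}\pi_{n,k+D_n}$, and then use $W_n<\infty$ to kill the tail for (\ref{eq:limit-r}). The only cosmetic difference is that the paper phrases the subtraction in terms of backlogs $N_{\text{tot}}=\sum_n\lambda_n W_n$ and invokes Little's theorem at the end, whereas you work with the delays $W_n$ directly; your tail bound $W_n^{(D_n)}\leq\sum_{j>D_n}j\pi_{nj}$ is in fact slightly cleaner than the paper's separate handling of the two terms.
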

\begin{proof}
See Appendix C. 
\end{proof}

We note that Theorems \ref{theorem:dist-shift} and \ref{theorem:delay-red}, 
and Corollary \ref{coro:dist-shift}  show that \emph{systems with predictive scheduling  can be analyzed by studying the original system without prediction}. From (\ref{eq:delay-reduction}), we also see that prediction reduces system delay, even when it is only applied to a subset of queues. This result makes rigorous the intuition that  applying prediction at any queue  effectively saves service opportunities for others and hence reduces system delay. 
Also note that the above results hold  under \emph{any} queueing discipline. The difference is that under different queueing disciplines, the delay distribution of the packets will be different. Hence, the delay reduction due to predictive scheduling will also be different. 



\subsection{Performance of \textsf{PBP}}
In this section, we analyze the performance of \textsf{PBP}. We have the following theorem, which states that allowing predictive scheduling does not change the optimal average cost.  
\begin{theorem}\label{theorem:opt-remains}
For any vector $\bv{D}\succeq\bv{0}$, we have:
\begin{eqnarray}
f_{\text{av}}^{\bv{D}*} &=& f_{\text{av}}^{*}. \quad\Diamond\label{eq:opt-remains}
\end{eqnarray}
\end{theorem}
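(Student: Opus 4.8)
The plan is to prove the two inequalities $f_{\text{av}}^{\bv{D}*} \leq f_{\text{av}}^{*}$ and $f_{\text{av}}^{\bv{D}*} \geq f_{\text{av}}^{*}$ separately. The first direction is essentially trivial: any non-predictive policy is a degenerate predictive policy (one that simply sets $\mu_n^{(d)}(t)=0$ for all $d\geq 0$ and never uses lookahead information), so the class of predictive algorithms with prediction vector $\bv{D}$ contains the non-predictive ones, and the minimum over the larger class can only be smaller. Hence $f_{\text{av}}^{\bv{D}*}\leq f_{\text{av}}^{*}$.

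The substantive direction is $f_{\text{av}}^{\bv{D}*}\geq f_{\text{av}}^{*}$, i.e., prediction cannot reduce the minimum achievable average cost. The approach I would take is to show that the cost-minimization problem is governed entirely by a \emph{throughput} constraint — each user's long-term service rate must cover its arrival rate $\lambda_n$ — and that this constraint is insensitive to whether the served packets arrived in the past or are being pre-served from the lookahead window. Concretely: take any feasible predictive scheduling policy $\Pi$ with prediction vector $\bv{D}$ that stabilizes all queues and achieves average cost $f_{\text{av}}^{\Pi}$. Under $\Pi$, the only actual queue is $Q_n^{(-1)}(t)$ (as noted after the prediction-queue definitions), and its stability together with the queueing dynamics forces the time-average total service $\frac{1}{t}\sum_{\tau}\sum_d \mu_n^{(d)}(\tau)$ to be at least $\lambda_n$ in the limit — because every arriving packet must eventually be served by one of the $\mu_n^{(d)}$ components, and backlog cannot grow without bound. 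The per-slot power vectors $\bv{P}(t)$ chosen by $\Pi$, being confined to $\script{P}^{(\bv{S}(t))}$, induce an empirical distribution over (state, power-vector) pairs; passing to a convergent subsequence (using compactness of each $\script{P}^{(s_i)}$ and boundedness of $f$ and $\mu_n$) yields a stationary randomized policy whose average cost is $\leq f_{\text{av}}^{\Pi}$ and whose average service rate for each $n$ is $\geq\lambda_n$. But a stationary randomized power allocation satisfying $\sum_{s_i}\pi_{s_i}\sum_m a_m^{(s_i)}\mu_n(s_i,\bv{P}_m^{(s_i)})\geq\lambda_n$ is exactly a feasible point of the optimization in Theorem~1 (the Carath\'eodory argument reduces the number of power vectors per state to $N+2$), so its cost is $\geq f_{\text{av}}^{*}$. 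Taking the infimum over $\Pi$ gives $f_{\text{av}}^{\bv{D}*}\geq f_{\text{av}}^{*}$.

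Alternatively — and probably more in the spirit of this paper — one can invoke Theorem~\ref{theorem:sample-path} directly: under a fully-efficient policy the aggregate queue $Q_n^{\text{sum}}(t)$ behaves exactly like a non-predictive queue $\hat Q_n(t)$ with shifted arrivals $\hat A_n(t)=A_n(t+D_n)$ and service $\hat\mu_n(t)=\sum_d\mu_n^{(d)}(t)$, and since $\hat A_n$ has the same mean $\lambda_n$ as $A_n$, stability of $\hat Q_n$ again forces $\overline{\hat\mu_n}\geq\lambda_n$; one then runs the same compactness/averaging argument on the power process. Either way, the key point is that the cost depends on the power allocations only through their empirical distribution over link states, while the stability requirement depends only on the \emph{mean} arrival rate, which prediction does not change — it only rearranges \emph{when} within a bounded window a given packet is served.

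The main obstacle is the limiting/averaging step: turning an arbitrary (possibly non-stationary, non-ergodic, history-dependent) feasible predictive policy into a stationary randomized policy with no worse cost and at least as much throughput. This requires care with $\limsup$ versus $\lim$, a subsequence extraction for the empirical measures on the compact sets $\script{P}^{(s_i)}$, and a clean argument that queue stability (in the time-average sense of~(\ref{eq:stable})) implies the service-rate inequality $\liminf_t \frac{1}{t}\sum_\tau\sum_d\mu_n^{(d)}(\tau)\geq\lambda_n$ even though individual packets may be pre-served out of future slots. This type of argument is standard in the Lyapunov-optimization literature (e.g.\ \cite{neelyenergy}, \cite{neelynowbook}), so I would reference it rather than reproduce every estimate, and the cleanest writeup likely just combines Theorem~\ref{theorem:sample-path} with the known converse bound for the non-predictive problem.
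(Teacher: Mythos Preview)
Your main approach---take an arbitrary stabilizing predictive policy, form empirical state-conditional averages of cost and rates, invoke Carath\'eodory and compactness to extract a stationary randomized policy, and then use stability of $Q_n^{(-1)}(t)$ to deduce $\lambda_n\leq\overline{\mu}_n$ so that this policy is feasible for the optimization in Theorem~1---is exactly the argument the paper gives in Appendix~D. One minor caveat: your alternative route via Theorem~\ref{theorem:sample-path} only applies to fully-efficient policies, so to make it cover \emph{all} predictive policies you would first need to observe that any stabilizing policy can be replaced by a fully-efficient one with the same power process and hence the same cost; the paper sidesteps this by working directly with the Carath\'eodory argument rather than invoking the sample-path equivalence.
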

\begin{proof}
See Appendix D. 
\end{proof}
We see that this is quite intuitive. Since no matter what prediction we have and how one allocates power for serving future arrivals, the minimum consumption is constrained by the average traffic arrival rate. This theorem delivers an important message that predictive scheduling does not change the optimal utility of the system, rather, \emph{it improves the system delay given the same utility performance}. 

We now have the following theorem, which shows that \textsf{PBP} achieves an average power consumption that is with $O(1/V)$ of the minimum and guarantees an average congestion bound. Note that this theorem is very similar to the results in previous literature of Backpressure, e.g., \cite{neelynowbook}, with the difference that here the average queue size combines the actual system backlog and the average size of the prediction queues. 
\begin{theorem}\label{theorem:pbp}
The \textsf{PBP} algorithm achieves the following: 
\begin{eqnarray}
f_{\text{av}}^{\textsf{PBP}} &\leq& f_{\text{av}}^{\bv{D}*} +\frac{B}{V},\label{eq:cost}\\
Q^{\text{sum}}_{\text{av}} & = & Q^{\text{BP}}_{\text{av}} = O(V). \label{eq:qsize}
\end{eqnarray}
Here $Q^{\text{sum}}_{\text{av}}$ denotes the average queue size of $\sum_nQ_n^{\text{sum}}(t)$, and $Q^{\text{BP}}_{\text{av}}$ denotes the average queue size under the Backpressure algorithm without prediction. 
$\Diamond$ 
\end{theorem}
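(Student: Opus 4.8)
The plan is to reduce the analysis of \textsf{PBP} to that of Backpressure (Theorem~\ref{theorem:bp}) through the sample-path equivalence of Theorem~\ref{theorem:sample-path}. Since \textsf{PBP} is fully-efficient by construction, Theorem~\ref{theorem:sample-path} gives $Q_n^{\text{sum}}(t)=\hat{Q}_n(t)$ for all $t$ and all $n$, where $\hat{Q}_n(t)$ is the single-queue process in (\ref{eq:q-4}) driven by the shifted arrivals $\hat{A}_n(t)=A_n(t+D_n)$ and the service $\hat{\mu}_n(t)=\sum_{d=-1}^{D_n-1}\mu_n^{(d)}(t)=\mu_n(\bv{S}(t),\bv{P}(t))$. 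Because $\{\bv{A}(t)\}$ is i.i.d., the shifted sequence $\{\hat{\bv{A}}(t)\}_{t\geq0}$ has the same law as $\{\bv{A}(t)\}_{t\geq0}$, so $\expect{\hat{A}_n(t)}=\lambda_n$ and $0\leq\hat{A}_n(t)\leq A_{\max}$; moreover $\hat{Q}_n(0)=\sum_{\tau=0}^{D_n-1}A_n(\tau)\leq D_nA_{\max}$ is finite. Finally, \textsf{PBP} picks $\bv{P}(t)$ by minimizing $Vf(\bv{P}(t))-\sum_nQ_n^{\text{sum}}(t)\mu_n(\bv{S}(t),\bv{P}(t))$ over $\script{P}^{\bv{S}(t)}$, which, after substituting $Q_n^{\text{sum}}(t)=\hat{Q}_n(t)$, is exactly the \textsf{BP} rule applied to the queues $\{\hat{Q}_n(t)\}$. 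Hence running \textsf{PBP} on the predictive system traces out precisely the same channel/power sequence, and therefore the same per-slot cost, as running \textsf{BP} (with finite initial backlog $\hat{\bv{Q}}(0)$) on a non-predictive system whose arrival statistics match the original.

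The cost bound (\ref{eq:cost}) now follows at once: the standing slackness assumption (\ref{eq:slack}) holds for the $\hat{Q}$-system since $\expect{\hat{A}_n(t)}=\lambda_n$, so Theorem~\ref{theorem:bp} gives $f_{\text{av}}\leq f_{\text{av}}^{*}+B/V$ with $B=\tfrac{N}{2}(\mu_{\max}^2+A_{\max}^2)$ for this run of \textsf{BP}, and Theorem~\ref{theorem:opt-remains} replaces $f_{\text{av}}^{*}$ by $f_{\text{av}}^{\bv{D}*}$. For a self-contained derivation I would instead run the drift-plus-penalty argument directly on $L(t)=\tfrac12\sum_n(Q_n^{\text{sum}}(t))^2$: squaring (\ref{eq:q-4}), summing over $n$, and taking conditional expectations yields $\Delta(t)+V\expect{f(t)\,|\,\bv{Q}^{\text{sum}}(t)}\leq B+\expect{Vf(\bv{P}(t))-\sum_nQ_n^{\text{sum}}(t)\mu_n(\bv{S}(t),\bv{P}(t))\,|\,\bv{Q}^{\text{sum}}(t)}+\sum_nQ_n^{\text{sum}}(t)\lambda_n$; since \textsf{PBP} minimizes the bracketed term slot-by-slot, one bounds it using an optimal stationary randomized policy (whose existence and rate-feasibility come from the structural characterization of $f_{\text{av}}^{*}$) to obtain (\ref{eq:cost}), and separately using the $\eta$-slack policy of (\ref{eq:slack}) to obtain $\Delta(t)+V\expect{f(t)}\leq B+Vf^{\max}-\eta\sum_nQ_n^{\text{sum}}(t)$; telescoping the latter over $t$ and dividing by $t$ gives $Q^{\text{sum}}_{\text{av}}\leq(B+Vf^{\max})/\eta=O(V)$.

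The equality $Q^{\text{sum}}_{\text{av}}=Q^{\text{BP}}_{\text{av}}$ is the part that needs the most care and is the \emph{main obstacle}. By the identification above, $\{\bv{Q}^{\text{sum}}(t)\}$ evolves under the same \textsf{BP} control rule with the same arrival law and the same channel law as the original Backpressure process, differing only in a finite random initial condition and a time-shift of the i.i.d. arrival input. Under (\ref{eq:slack}) with $\eta>0$ the \textsf{BP}-controlled chain is positive recurrent with a unique stationary distribution, so its time-average total queue size is insensitive both to the (finite) initial state and to a shift of the arrival sequence --- this is precisely the hypothesis used in Corollary~\ref{coro:dist-shift}. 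Therefore the long-run average of $\sum_nQ_n^{\text{sum}}(t)$ equals the long-run average of $\sum_nQ_n(t)$ under \textsf{BP}, i.e. $Q^{\text{sum}}_{\text{av}}=Q^{\text{BP}}_{\text{av}}$. If only the weaker statement $Q^{\text{sum}}_{\text{av}}=O(V)$ is desired, the telescoped drift bound above already suffices and this ergodicity step can be skipped.
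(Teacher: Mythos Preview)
Your argument is correct and follows essentially the same route as the paper: use the sample-path equivalence (Theorem~\ref{theorem:sample-path}) to identify $Q_n^{\text{sum}}(t)$ with the queue $\hat{Q}_n(t)$ of an auxiliary non-predictive system with shifted i.i.d.\ arrivals and finite initial backlog, observe that \textsf{PBP} on the predictive system is literally \textsf{BP} on the auxiliary system, and then invoke Theorem~\ref{theorem:bp}. Your additional care with the equality $Q^{\text{sum}}_{\text{av}}=Q^{\text{BP}}_{\text{av}}$ --- arguing via positive recurrence that the time-average is insensitive to the finite initial state and the arrival time-shift --- and your explicit use of Theorem~\ref{theorem:opt-remains} to pass from $f_{\text{av}}^{*}$ to $f_{\text{av}}^{\bv{D}*}$ are both points the paper's proof leaves implicit.
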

\begin{proof}
See Appendix E. 
\end{proof}

Theorem \ref{theorem:pbp} states that the average size of $\sum_nQ_n^{\text{sum}}(t)$ is the same as $Q^{\text{BP}}_{\text{av}}$ under Backpressure. Since $Q_n^{\text{sum}}(t)$ is the total size of the actual queue and the prediction queues, we can already see that the actual queue size will be smaller than that under the original Backpressure scheme. Since the average queue size under \textsf{PBP} is also finite, we can  use Theorem \ref{theorem:delay-red} to have the following immediate corollary. 
\begin{coro} 
Suppose there exists a steady-state distribution of the queue vector under  \textsf{PBP}. Then, the average delay under \textsf{PBP} goes to zero as $\bv{D}\rightarrow\infty$. $\Diamond$
\end{coro}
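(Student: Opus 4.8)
The plan is to obtain the result by composing three structural facts already in hand: the sample-path equivalence (Theorem~\ref{theorem:sample-path}), the delay-distribution shift (Corollary~\ref{coro:dist-shift} and Theorem~\ref{theorem:delay-red}), and the congestion bound for \textsf{PBP} (Theorem~\ref{theorem:pbp}). First I would note that \textsf{PBP} allocates the rates $\{\mu_n^{(d)}(t)\}_{d=-1}^{D_n-1}$ in a fully-efficient manner by construction, so Theorem~\ref{theorem:sample-path} applies: for each $n$, $Q_n^{\text{sum}}(t)$ evolves exactly as the non-predictive queue $\hat{Q}_n(t)$ in \eqref{eq:q-4} with delayed arrivals $\hat{A}_n(t)=A_n(t+D_n)$, random initial backlog $\hat{Q}_n(0)=\sum_{\tau=0}^{D_n-1}A_n(\tau)$, and service $\hat{\mu}_n(t)=\mu_n(\bv{S}(t),\bv{P}(t))$. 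Since \textsf{PBP} chooses $\bv{P}(t)$ using only $\{Q_n^{\text{sum}}(t)\}$ and $\bv{S}(t)$, and $Q_n^{\text{sum}}(t)=\hat{Q}_n(t)$, the process $\{\hat{Q}_n(t)\}$ is exactly the queueing process of \textsf{PBP} run with $\bv{D}=\bv{0}$ (equivalently, plain Backpressure), up to the time-shift of the arrival sequence and the finite random initial state.

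Next I would verify the hypotheses of Corollary~\ref{coro:dist-shift}. Because $\bv{A}(t)$ is i.i.d., the shifted sequence $\{A_n(t+D_n)\}_t$ has the same law as $\{A_n(t)\}_t$, so the delay in the arrival process is immaterial in distribution. For the initial condition, I would invoke the corollary's standing assumption (existence of a steady-state distribution of the queue vector under \textsf{PBP}) together with the positive recurrence underlying it: $\hat{Q}_n(0)$ is a.s. finite, so in an ergodic stationary regime the steady-state delay distribution of $\hat{Q}_n(t)$ does not depend on it. Corollary~\ref{coro:dist-shift} then yields $\pi_{n0}^{(D_n)}=\sum_{k=0}^{D_n}\pi_{nk}$ and $\pi_{nk}^{(D_n)}=\pi_{n,k+D_n}$ for $k\geq1$, where $\pi_{nk}$ is the delay distribution under Backpressure ($\bv{D}=\bv{0}$).

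Finally I would establish $W_{\text{tot}}<\infty$ and conclude. By Theorem~\ref{theorem:pbp}, $Q^{\text{sum}}_{\text{av}}=Q^{\text{BP}}_{\text{av}}=O(V)<\infty$ for the fixed $V$ in use. Applying Little's law queue by queue to the Backpressure system gives $\sum_n\lambda_n\sum_{k\geq0}k\pi_{nk}=Q^{\text{BP}}_{\text{av}}<\infty$, hence $W_{\text{tot}}=Q^{\text{BP}}_{\text{av}}/\sum_n\lambda_n<\infty$ (the degenerate case $\sum_n\lambda_n=0$ being trivial). Theorem~\ref{theorem:delay-red} then applies directly: the average delay under \textsf{PBP} with window vector $\bv{D}$ equals $W_{\text{tot}}-\sum_n R(\bv{D})$, and $\lim_{\bv{D}\rightarrow\infty}\sum_n R(\bv{D})=W_{\text{tot}}$, so the average \textsf{PBP} delay tends to $0$ as $\bv{D}\rightarrow\infty$.

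The step I expect to be the main obstacle is the verification in the second paragraph: arguing that the equivalent non-predictive chain forgets its random initial state, so that its steady-state delay law coincides with that of plain Backpressure started empty. This requires the chain to be positive recurrent with a unique invariant law; the corollary's hypothesis supplies existence, and I would argue uniqueness/ergodicity from the i.i.d. driving sequences together with reachability of the empty state (or simply appeal to the regularity implicitly assumed throughout the paper). The remaining steps are a mechanical composition of the cited theorems.
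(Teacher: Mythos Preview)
Your proposal is correct and follows essentially the same route the paper takes: the paper presents the corollary as an ``immediate'' consequence of the finiteness of the average queue size under \textsf{PBP} (Theorem~\ref{theorem:pbp}) together with Theorem~\ref{theorem:delay-red}. You simply make explicit the intermediate verifications (full-efficiency of \textsf{PBP}, applicability of Corollary~\ref{coro:dist-shift} via initial-state independence, and Little's law to obtain $W_{\text{tot}}<\infty$) that the paper leaves implicit in its one-line justification.
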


It is tempting to analyze the exact delay reduction offered by performing predictive scheduling in Backpressure. 
However, due to the complex queueing dynamics under Backpressure, it is  challenging to compute the exact distributions $\pi_{nk}$ even without prediction and  to obtain explicit delay reduction results under general settings. 
Therefore, in below, we will that for a general class of cost-minimization problems, predictive scheduling improves the average system delay by an amount that is linear in the prediction window size. 

To do so, we will use a theorem from \cite{huangneely_dr_tac}, which shows that under Backpressure, the queue vector of the system is typically ``attracted'' to a fixed point of size $\Theta(V)$. For stating the algorithm, we define the following dual problem of a scaled version of (\ref{eq:opt-noprediction}). 
\begin{eqnarray}
\max: \quad g(\bv{\gamma}),  \quad\text{s.t.}\,\,\,\bv{\gamma}\succeq\bv{0}, \label{eq:dual}
\end{eqnarray}
where $g(\bv{\gamma})$ is the dual function defined as: 
\begin{eqnarray}
\hspace{-.3in}&&g(\bv{\gamma}) = \sum_{s_i}\pi_{s_i}\inf_{  \bv{P}_m^{(s_i)} \in\script{P}^{(s_i)} }\bigg\{  Vf(s_i, \bv{P}_m^{(s_i)}) \label{eq:dual-fun}\\
\hspace{-.3in}&&\qquad\qquad\qquad\qquad\qquad+ \sum_n\gamma_n[  \lambda _n -\mu_n(s_i, \bv{P}_m^{(s_i)}) ]  \bigg\}. \nonumber
\end{eqnarray}
Notice that $g(\bv{\gamma})$ is the dual function of (\ref{eq:opt-noprediction}) with a scaled objective (by $V$) and without the variables $a_m^{(s_i)}$. Now we have the following theorem (which is Theorem 1 in \cite{huangneely_dr_tac}), in which we use $\bv{\gamma}^*$ to denote the optimal solution of (\ref{eq:dual}). According to \cite{huangneely_dr_tac}, we know that $\bv{\gamma}^*$ is either $\Theta(V)$ or $0$. 
\begin{theorem}\label{thm:prob_multi_con}
Suppose (i) $\bv{\gamma}^*$ is unique, (ii) the $\eta$-slack condition (\ref{eq:slack}) is satisfied with $\eta>0$, (iii) the dual function $g(\bv{\gamma})$ satisfies: 
\begin{eqnarray}
g(\bv{\gamma}^*)\geq g(\bv{\gamma})+L||\bv{\gamma}^*-\bv{\gamma}||,\quad\forall\,\,\bv{\gamma}\succeq\bv{0}\label{eq:dualpolyhedralcond},
\end{eqnarray} 
for some constant $L>0$ independent of $V$. Then, under Backpressure without prediction, there exist constants $G, K, c=\Theta(1)$, i.e., all independent of $V$, such that for any $m\in \mathbb{R}_+$, 
\begin{eqnarray}
\script{P}^{(r)}(G, Km)&\leq& ce^{-m},\label{eq:prob_pmr_special}
\end{eqnarray}
where $\script{P}^{(r)}(G, Km)$ is defined:
\begin{eqnarray}
\hspace{-.3in}&&\script{P}^{(r)}(G, Km)\label{eq:pmr_def}\\
\hspace{-.3in}&& \quad\triangleq\limsup_{t\rightarrow\infty}\frac{1}{t}\sum_{\tau=0}^{t-1}\prob{\exists\, n, |Q_n(\tau)-\gamma_{n}^*|>G+Km}.\,\,\Diamond\nonumber
\end{eqnarray}
\end{theorem}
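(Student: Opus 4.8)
The plan is to establish the claimed exponential concentration by the ``exponential attraction'' argument of \cite{huangneely_dr_tac}: under Backpressure the queue vector $\bv{Q}(t)$ behaves like a random walk that is pushed back toward the point $\bv{\gamma}^*$ whenever it wanders far away and whose per-slot displacement is uniformly bounded, and a geometric tail around such an attractor then follows from a standard drift lemma. Concretely, I would work with the shifted Lyapunov quantity $\Psi(t)\triangleq\norm{\bv{Q}(t)-\bv{\gamma}^*}$ and prove two facts. First, \emph{bounded increments}: since $0\leq A_n(t)\leq A_{\max}$ and $0\leq\mu_n(\bv{S}(t),\bv{P}(t))\leq\mu_{\max}$, each coordinate of $\bv{Q}(\cdot)$ moves by at most $\max(A_{\max},\mu_{\max})$ per slot, so $|\Psi(t+1)-\Psi(t)|\leq\delta_{\max}$ for a $V$-independent constant $\delta_{\max}=\Theta(1)$. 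Second, \emph{uniform negative drift outside a $\Theta(1)$ ball}: there exist $G_0,\beta=\Theta(1)$, independent of $V$, with
\[
\expect{\Psi(t+1)-\Psi(t)\mid\bv{Q}(t)}\leq-\beta\quad\text{whenever}\quad\Psi(t)>G_0.
\]
Given these two facts, (\ref{eq:prob_pmr_special}) is exactly the conclusion of the Hajek-type exponential bound for nonnegative processes with bounded increments and a constant negative drift above a threshold; a union bound over the $N$ coordinates, using $|Q_n(\tau)-\gamma_n^*|\leq\Psi(\tau)$, then yields the coordinate-wise form in (\ref{eq:pmr_def}), with $G$, $K$, and $c$ determined solely by $\delta_{\max}$, $\beta$, and $G_0$, hence all $\Theta(1)$.

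The crux is the negative-drift fact, and this is where the three hypotheses enter. The starting observation is that the Backpressure rule selects $\bv{P}(t)$ to minimize $Vf(\bv{P}(t))-\sum_nQ_n(t)\mu_n(\bv{S}(t),\bv{P}(t))$, which --- up to the $\bv{Q}(t)$-linear term $\sum_nQ_n(t)\lambda_n$ --- is precisely the expression inside the infimum defining the dual function $g(\cdot)$ in (\ref{eq:dual-fun}), evaluated at $\bv{\gamma}=\bv{Q}(t)$. Expanding the one-slot drift of $\Psi(t)^2$, discarding the (favorable) contribution of the $[\cdot]^+$ truncations in the queue update, substituting the Backpressure decision, and using the $\eta$-slack assumption (\ref{eq:slack}) to certify feasibility of the stationary randomized policy attaining $g(\bv{\gamma}^*)$, one obtains a bound of the form
\[
\expect{\Psi(t+1)^2-\Psi(t)^2\mid\bv{Q}(t)}\leq 2\big(g(\bv{\gamma}^*)-g(\bv{Q}(t))\big)+2B,
\]
where $B=\Theta(1)$ is the usual second-moment constant from Theorem \ref{theorem:bp}. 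The polyhedral condition (\ref{eq:dualpolyhedralcond}) then gives $g(\bv{\gamma}^*)-g(\bv{Q}(t))\leq-L\Psi(t)$, so the quadratic drift is at most $-2L\Psi(t)+2B$, which is strongly negative once $\Psi(t)>B/L$. A standard conversion from quadratic to linear drift (as in \cite{huangneely_dr_tac}), which uses the bounded increments to control the denominator $\Psi(t)+\Psi(t+1)$, turns this into the linear negative drift above, with $\beta$ of order $L$ and $G_0=\Theta(1)$; uniqueness of $\bv{\gamma}^*$ is what makes $\Psi(t)$ --- distance to a single point rather than to a set --- the appropriate Lyapunov quantity.

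I expect the main obstacle to be exactly this drift computation: extracting the restoring term $L\Psi(t)$ while keeping every error term $\Theta(1)$. Care is needed because $\bv{\gamma}^*$ is itself $\Theta(V)$, so one must verify that the slack from Backpressure's approximate optimality, the truncation errors in the queue dynamics, and the mismatch between a single channel realization and the distribution $\{\pi_{s_i}\}$ are all $O(1)$ and therefore cannot swallow the $L\Psi(t)$ term; this is also where one genuinely uses that, near its maximizer, $g$ behaves like the right-hand side of (\ref{eq:dualpolyhedralcond}) rather than merely being smooth. Once the negative-drift fact is secured, the remaining passage --- from bounded increments plus constant negative drift to the geometric tail, and then to (\ref{eq:prob_pmr_special})--(\ref{eq:pmr_def}) --- is routine.
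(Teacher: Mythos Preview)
The paper does not prove this theorem at all: its entire proof is ``See \cite{huangneely_dr_tac},'' i.e., the result is quoted verbatim from Huang--Neely and not rederived here. Your sketch is a faithful outline of the exponential-attraction argument in that reference---bounded per-slot increments, the identification of the Backpressure action with the inner minimization in the dual function $g(\cdot)$ evaluated at $\bv{\gamma}=\bv{Q}(t)$, the polyhedral growth condition (\ref{eq:dualpolyhedralcond}) supplying a restoring drift of order $L\Psi(t)$, and a Hajek-type lemma converting constant negative drift plus bounded steps into a geometric tail---so there is nothing to compare against within this paper. The one point worth tightening in your write-up is the passage from the quadratic drift bound to the linear drift on $\Psi(t)$: in \cite{huangneely_dr_tac} this is handled carefully (and the $\eta$-slack condition is used not only for feasibility but to ensure the drift argument goes through uniformly), so when you flesh this out you should track that step rather than treat it as purely routine.
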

\begin{proof}
See \cite{huangneely_dr_tac}. 
\end{proof}
As shown in \cite{huangneely_dr_tac}, the above conditions (i)-(iii) are satisfied in many practical network optimization problems, especially when $\script{P}^{(s_i)}$ are finite. Under this assumptions, Theorem \ref{thm:prob_multi_con} states that the queue vector $\bv{Q}(t)=(Q_1(t), ..., Q_N(t))$ mostly stays around the fixed point $\bv{\gamma}^*$ \cite{huangneely_dr_tac}. 

We now state our theorem regarding the average backlog reduction due to predictive scheduling. 
\begin{theorem} \label{theorem:pbp-delay}
Suppose (i) the conditions in Theorem \ref{thm:prob_multi_con} hold, (ii) $\bv{\gamma}^*=\Theta(V)>0$, (iii) there exists a steady-state distribution of $\bv{Q}(t)$ under \textsf{PBP},  (iv) $D_n=O(\frac{1}{A_{\max}}[\gamma_{n}^*-G-K(\log(V))^2 - \mu_{\max}]^+)$ for all $n$, and (iv) FIFO is used. Then, under \textsf{PBP} with a sufficiently large $V$, we have:  
\begin{eqnarray}
Q_{\text{av}}^{(0)} \leq Q_{\text{av}}^{\text{BP}} - \sum_nD_n[\lambda_n-O(\frac{1}{V^{\log(V)}})]^+. \,\, \Diamond\label{eq:q-av-0}
\end{eqnarray}
\end{theorem}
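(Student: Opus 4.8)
The plan is to combine three ingredients already in hand: the identity $Q_n^{\text{sum}}(t)=Q_n^{(-1)}(t)+\sum_{d=0}^{D_n-1}Q_n^{(d)}(t)$, the fact from Theorem~\ref{theorem:pbp} that the time-average of $\sum_nQ_n^{\text{sum}}(t)$ equals $Q_{\text{av}}^{\text{BP}}$, and the concentration of $Q_n^{\text{sum}}$ around $\gamma_n^*$ obtained by feeding Theorem~\ref{theorem:sample-path} into Theorem~\ref{thm:prob_multi_con}. Since $Q_n^{(-1)}(t)$ is the only real backlog, $Q_{\text{av}}^{(0)}=Q_{\text{av}}^{\text{BP}}-\sum_n\overline{\sum_{d=0}^{D_n-1}Q_n^{(d)}}$, where $\overline{(\cdot)}$ denotes the steady-state mean (using hypothesis (iii)). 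So the theorem reduces to the per-queue lower bound $\overline{\sum_{d=0}^{D_n-1}Q_n^{(d)}}\ge D_n[\lambda_n-O(1/V^{\log V})]^+$, which I would then sum over $n$.

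To obtain that bound, the key structural observation is: whenever $Q_n^{(-1)}(\tau)\ge\mu_{\max}\ge\mu_n(\tau)$, the FIFO-plus-fully-efficient rule necessarily allocates $\mu_n^{(-1)}(\tau)=\mu_n(\tau)$ and $\mu_n^{(d)}(\tau)=0$ for all $d\ge0$, so in slot $\tau$ no service reaches the prediction queues. To guarantee $Q_n^{(-1)}$ is this large, I would use concentration. By Theorem~\ref{theorem:sample-path}, $Q_n^{\text{sum}}(t)=\hat Q_n(t)$, the backlog of an ordinary (non-predictive) Backpressure queue fed by a time-shifted but still i.i.d. arrival stream, so Theorem~\ref{thm:prob_multi_con} applies to it: taking $m=\Theta((\log V)^2)$ gives, in steady state, $\prob{Q_n^{\text{sum}}<\gamma_n^*-G-Km}\le c\,e^{-m}=O(V^{-\Theta(\log V)})$. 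Because $Q_n^{(d)}(t)\le A_{\max}$, we have $Q_n^{(-1)}(t)\ge Q_n^{\text{sum}}(t)-D_nA_{\max}$, and hypothesis (iv) is exactly the inequality $\gamma_n^*-G-Km-D_nA_{\max}\ge\mu_{\max}$; hence on the event $\{Q_n^{\text{sum}}(\tau)\ge\gamma_n^*-G-Km\}$ we indeed get $Q_n^{(-1)}(\tau)\ge\mu_{\max}$ and no predictive service in slot $\tau$.

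Next I would chain this over a window of $D_n$ consecutive slots ending at the observation time. Let $\mathcal{G}_n$ be the stationary event that $Q_n^{\text{sum}}(\tau)\ge\gamma_n^*-G-Km$ throughout that window; by stationarity and a union bound, $\prob{\mathcal{G}_n^c}\le D_nc\,e^{-m}$. On $\mathcal{G}_n$ no service reaches the prediction queues anywhere in the window, so by the dynamics (\ref{eq:q-1})--(\ref{eq:q-2}) the prediction queues act as pure shift registers loaded with raw future arrivals, i.e. $Q_n^{(d)}(t)=A_n(t+d)$ for $0\le d\le D_n-1$, and therefore
\begin{eqnarray}
\overline{\textstyle\sum_{d=0}^{D_n-1}Q_n^{(d)}} &\ge& \expect{\textstyle\sum_{d=0}^{D_n-1}A_n(t+d)\,\mathbf{1}_{\mathcal{G}_n}}\nonumber\\
&\ge& D_n\lambda_n-D_nA_{\max}\prob{\mathcal{G}_n^c}\geq D_n\lambda_n-D_n^2A_{\max}c\,e^{-m}.\nonumber
\end{eqnarray}
Since hypothesis (iv) together with $\gamma_n^*=\Theta(V)$ forces $D_nA_{\max}=O(V)$, the error term is $D_n^2A_{\max}c\,e^{-m}=D_n\cdot O(V)\cdot c\,e^{-m}=D_n\cdot O(1/V^{\log V})$ provided $m$ is chosen as a sufficiently large constant multiple of $(\log V)^2$; this gives the per-queue bound, and summing over $n$ yields (\ref{eq:q-av-0}).

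The main obstacle I anticipate is legitimizing the concentration step: Theorem~\ref{thm:prob_multi_con} is established for non-predictive Backpressure with a standard (e.g. zero) initial condition and an untranslated arrival process, whereas here it must be transported through the sample-path identity of Theorem~\ref{theorem:sample-path} to $\hat Q_n$, which has the atypical initial value $\hat Q_n(0)=\sum_{t=0}^{D_n-1}A_n(t)$ and a shifted arrival stream; one must argue, in the spirit of Corollary~\ref{coro:dist-shift}, that the steady-state behavior is insensitive to both, and invoke hypothesis (iii) to upgrade the time-average probability bound of Theorem~\ref{thm:prob_multi_con} to a stationary one. A secondary, purely arithmetic obstacle is the exponent bookkeeping: the factor $D_n\le O(V)$ erodes the decay, so $m$ must be taken a large enough multiple of $(\log V)^2$ and the extra constant absorbed into the $K$ of hypothesis (iv), which is also what forces the ``sufficiently large $V$'' qualifier. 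Finally, I would dispose of the degenerate case separately: when hypothesis (iv) makes $D_n=0$ the corresponding summand vanishes and there is nothing to prove, and for the remaining queues $\lambda_n$ is a fixed positive constant while $O(1/V^{\log V})\to0$, so the outer $[\cdot]^+$ is eventually redundant.
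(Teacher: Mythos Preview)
Your proposal is correct and tracks the paper's proof closely: both start from the decomposition $Q_{\text{av}}^{(0)}=Q_{\text{av}}^{\text{BP}}-\sum_n\overline{\sum_{d\ge0}Q_n^{(d)}}$, both invoke Theorem~\ref{theorem:sample-path} to transport the concentration of Theorem~\ref{thm:prob_multi_con} to $Q_n^{\text{sum}}$, and both use the FIFO observation that no predictive service occurs when $Q_n^{(-1)}(t)\ge\mu_{\max}$ together with $Q_n^{(d)}\le A_{\max}$ and hypothesis~(iv). The one genuine divergence is the last step, converting ``rarely any predictive service'' into the lower bound $\overline{\sum_{d\ge0}Q_n^{(d)}}\ge D_n[\lambda_n-O(V^{-\log V})]^+$. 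The paper argues via rates and Little's theorem: since $\prob{Q_n^{(-1)}<\mu_{\max}}\le c\,e^{-(\log V)^2}$, the average service leaked to the prediction queues is at most $c\mu_{\max}/V^{\log V}$, hence the throughput of packets that traverse all $D_n$ prediction stages is at least $[\lambda_n-c\mu_{\max}/V^{\log V}]^+$, and since each such packet resides exactly one slot in each stage, $\sum_{d\ge0}\overline{Q_n^{(d)}}\ge D_n[\lambda_n-c\mu_{\max}/V^{\log V}]^+$. Your shift-register route is equally valid but costs you the union bound over a $D_n$-window, which is precisely the ``arithmetic obstacle'' you flag: it forces $m$ to be a larger multiple of $(\log V)^2$ and makes the match with the specific constants $G,K$ in hypothesis~(iv) slightly awkward. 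The paper's rate argument sidesteps this entirely, since a single-slot probability bound suffices and $m=(\log V)^2$ works on the nose. The ``main obstacle'' you raise about legitimizing the concentration step is real, and the paper handles it exactly as you suggest (implicitly appealing to hypothesis~(iii) and the insensitivity to initial conditions).
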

\begin{proof}
See Appendix F. 
\end{proof}
Using Little's theorem, we see that Theorem \ref{theorem:pbp-delay} implies that the system delay is reduced roughly linearly in the prediction window size $D_n$.  Note that if the $D_n$ value is larger than $O(\frac{1}{A_{\max}}[\gamma_{Vn}^*-G-K(\log(V))^2 - \mu_{\max}]^+)$, the decrement of the system delay will start to become sub-linear. In this case, most packets will not even enter $Q_n^{(-1)}(t)$, resulting in a very small system delay  (see the simulation section for numerical examples).


\section{Simulation}\label{section:simulation}
We present simulation results of the \textsf{PBP} algorithm in this section. We simulation the system in Fig. \ref{fig:multi-queue} with $N=2$. The arrival processes $A_1(t)$ and $A_2(t)$ are independent processes. $A_1(t)$  takes value $3$ or $0$ with probabilities $0.2$ and $0.8$, respectively. $A_2(t)$ takes value $2$ or $0$ with equal probabilities. For both $n=1,2$, we assume that $S_n(t)\in\{1, 2\}$. Then, the service rate is given by $\mu_n(t)=\lfloor\log(1+S_n(t)P_n(t))\rfloor$. We assume that $P_n(t) \in\script{P}^{(S_n)}\triangleq \{0, 5, 10\}$. However, we assume that at any time, only one channel receives nonzero power allocation, i.e., $P_1(t)P_2(t)=0$. 

We set $\bv{D}=\rho*[5, 10]$. Then, we simulate the cases $\rho\in\{1, 3, 5, 10\}$ to see the effect of the predictive scheduling. We simulate the algorithm with $V\in\{1, 3, 5, 10, 20, 50\}$. Each simulation is run for $5\times10^5$ time slots. Fig. \ref{fig:pbp-per} shows the performance of the  \textsf{PBP} algorithm with FIFO. We see from the left plot that the average power consumption decreases as one increases the $V$ value. Indeed, we observe that when $V=5$, the power performance is already very close to the optimal value. The right plot shows the average backlog under  \textsf{PBP}. It is not hard to see that the average system backlog scales as $O(V)$. One also sees that as the prediction window sizes increase, the network delay  decreases linearly in $\bv{D}$. 
\begin{figure}[cht]
\centering
\includegraphics[height=2.3in, width=3.4in]{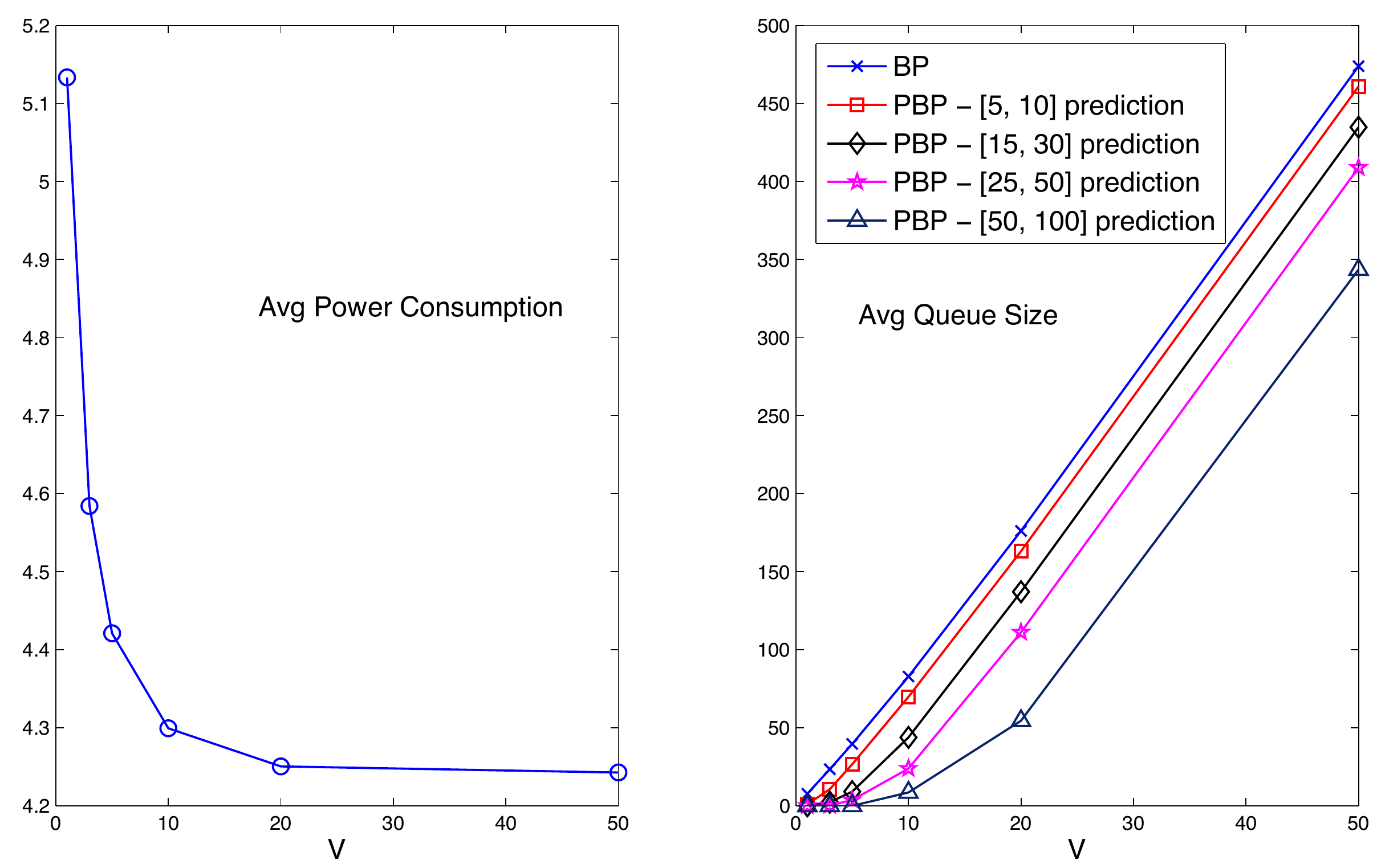}
\vspace{-.1in}
\caption{Left: Average power consumption under \textsf{PBP}. Right: Average queue size under  \textsf{PBP} with different prediction window sizes. }\label{fig:pbp-per}
\vspace{-.1in}
\end{figure}

We now look at the delay distribution of the packets under  \textsf{PBP}. Fig. \ref{fig:dist-fifo} shows the distribution for the setting with $V=10$ and $\rho=3$. We see that  the distributions of the latency for both queues are shifted to the left by $D_1$ and $D_2$, as shown in  Theorem \ref{theorem:dist-shift}. It can also be easily verified that in this case, Corollary \ref{coro:dist-shift} also holds. 
\begin{figure}[cht]
\centering
\includegraphics[height=2.3in, width=3.4in]{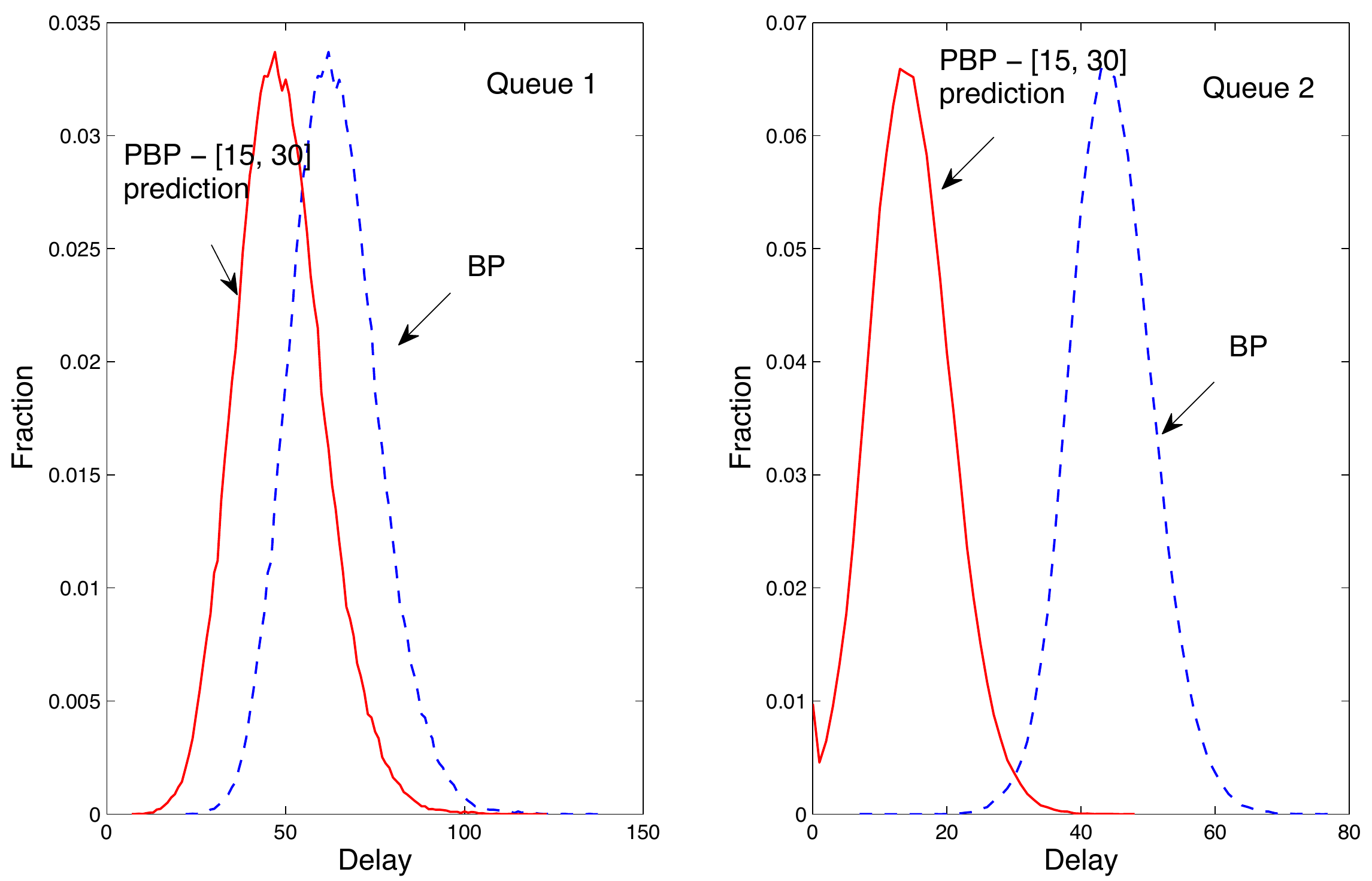}
\vspace{-.1in}
\caption{Packet delay distribution under \textsf{PBP} with FIFO scheduling with $V=10$ and $\bv{D}=[15, 30]$. We see that predictive scheduling effectively shifts the distribution to the left by $D_n$ for queue $n$. }\label{fig:dist-fifo}
\vspace{-.1in}
\end{figure}

Fig. \ref{fig:dist-lifo} then shows the delay distributions under \textsf{PBP} and the original Backpressure, under the LIFO discipline.  It can be verified that the distribution for the predictive system is also a left-shifted version of the one under Backpressure. We see that large fractions of packets experience $0$ delay in both queues, i.e., they are served before they arrive at $Q_n^{(-1)}(t)$. This is so because under LIFO Backpressure (no prediction), most packets roughly experience $(\log(V))^2$ delay. Thus, with a moderate size prediction window size, the server can serve most packets before they enter the system. Since we use a log-scale for the x-axis, we do not plot the fraction for packets that have zero delay. Instead, we show the numbers in the plot. We see that $47.62\%$ of the packets are served before they enter the system, whereas $92.89\%$ of the packets are served for queue $2$. This is expected since queue $2$ has a larger prediction window size. 
The results also demonstrate the power of predictive scheduling. 
\begin{figure}[cht]
\centering
\includegraphics[height=2.3in, width=3.4in]{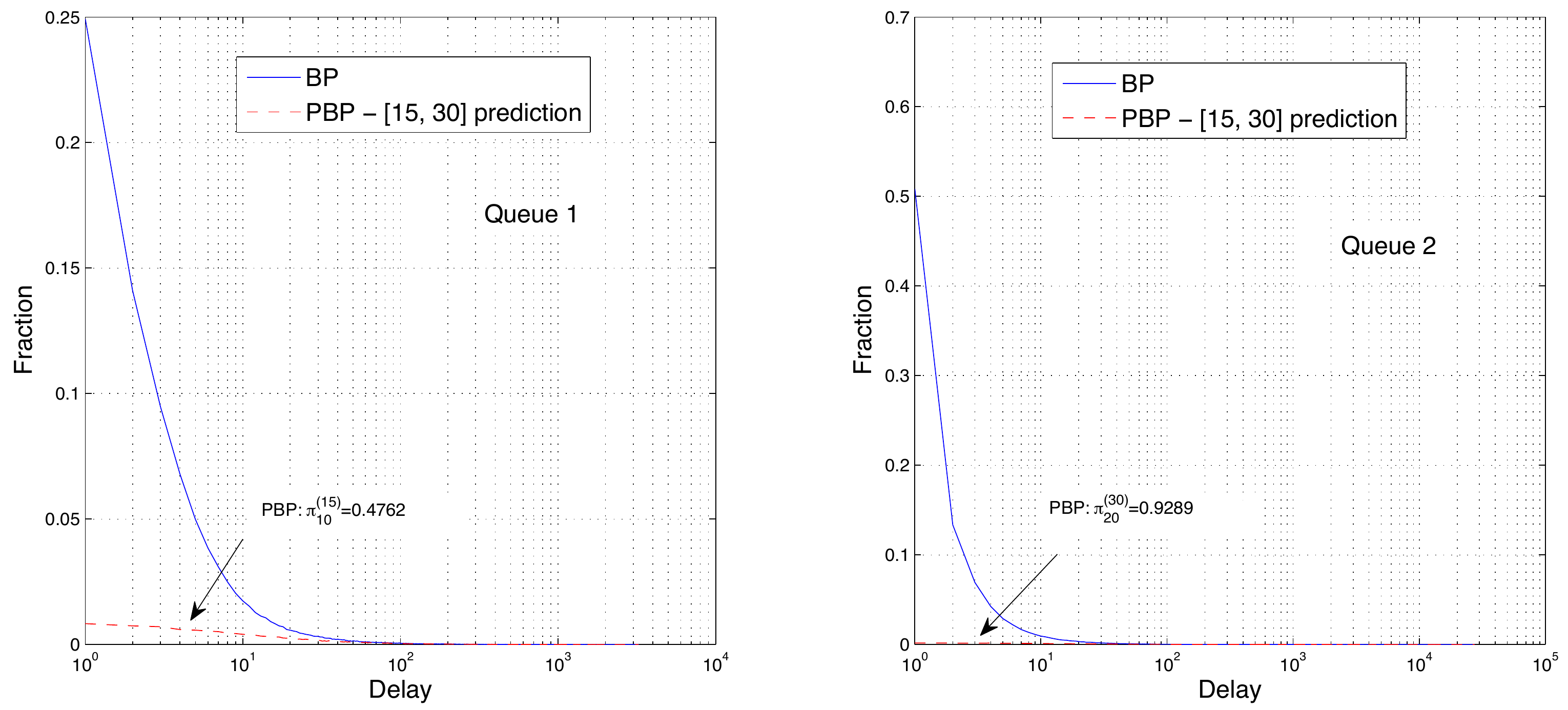}
\vspace{-.1in}
\caption{Packet Delay distribution under \textsf{PBP} with LIFO scheduling ($V=10$ and $\bv{D}=[15, 30]$). We see that a large fraction of the packets now experience $0$ delay! This is because with a moderate size prediction window, most packets are served before they arrive at $Q_n^{(-1)}(t)$. In the plots, $\pi^{(D_n)}_{n0}$ denotes the fraction of packets experience $0$ delay. }\label{fig:dist-lifo}
\end{figure}

\section{Conclusion}\label{section:conclusion}
In this paper, we investigate the fundamental benefit of predictive scheduling in controlled queue systems. Based on a lookahead prediction window model, we  establish a novel system-equivalence result, which enables detailed analysis of the system under predictive scheduling using traditional queueing network control methods. We then  propose the \textsf{Predictive Backpressure (PBP)} algorithm. We show that \textsf{(PBP)} can achieve a cost performance that is arbitrarily close to the optimal, while guaranteeing that the average system delay vanishes as the prediction window size increases. 

\vspace{-.1in}
\section*{Appendix A -- Proof of Theorem \ref{theorem:sample-path}}
Here we prove  Theorem \ref{theorem:sample-path}. 
\begin{proof}
We prove the result by induction with the aid of the following figure showing the evolution of $\hat{Q}_n(t)$. 
\begin{figure}[cht]
\centering
\includegraphics[height=0.4in, width=2.4in]{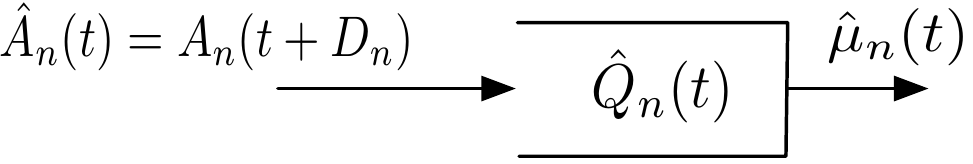}
\caption{The original queue without prediction and with delayed arrivals.}\label{fig:queue-delayed}
\vspace{-.1in}
\end{figure}

First we see that the the result holds for $t=0$. To see this, note that at time $0$,   $\hat{Q}_n(0)=\sum_{t=0}^{D_n-1}A_n(t)$. In the system under predictive scheduling, since $Q^{(0)}_n(0)=0$ and $Q_n^{(d)}(0)=A_n(d)$ for $d\in\{0, ..., D_n-1\}$, we also have $Q^{\text{sum}}_n(0)=\sum_{t=0}^{D_n-1}A_n(t)$. 

Now suppose the result holds for all $t=0, ..., k$, we show that it holds for $t=k+1$. To show this, we label all the packets in $\hat{Q}_n(k)$ starting from the head of the queue. 
That is, the head-of-line packet is labeled $p_1$, and the last packet in the queue is labeled $p_{\hat{Q}_n(k)}$. 

Since at time $k$, we have $\hat{Q}_n(k)=Q_n^{\text{sum}}(k)$, we also label the packets in $\{Q_n^{(d)}(k)\}_{d=-1}^{D_n-1}$ starting from the right in Fig. \ref{fig:queue}. That is, we label the HOL packet in $Q_n^{(-1)}(k)$ as $p'_1$ and the last packet in $Q_n^{(-1)}(k)$ as $p'_{Q_n^{(-1)}(k)}$. Similarly, we label the packets in $Q_n^{(d)}(k)$ with labels $p'_{\sum_{d'<d}Q_n^{d'}(k)+1}$ to $p'_{\sum_{d'\leq d}Q_n^{d'}(k)}$ for all $d$. 

Using the queueing dynamic equation (\ref{eq:q-4}), we know that in time slot $k$, $\tilde{\mu}_n(k)=\min[\mu_n(k), \hat{Q}_n(k)]$ packets will be served from $\hat{Q}_n(k)$. Now consider the queues $\{Q_n^{(d)}(k)\}_{d=-1}^{D_n-1}$. We see that since the scheduling policy is fully-efficient, we must have that the number of packets served from these queues is also $\tilde{\mu}_n(k)=\min[\mu_n(k), \hat{Q}_n(k)]$. 
To see this, note that if $\tilde{\mu}_n(k)=\mu_n(k)$, then it means that there are more packets in the queues than the number of packets that can be served. In this case, we must have $\mu_n^{(d)}(k)\leq Q_n^{(d)}(k)$ for all $d$. Also, because the policy is fully-efficient, $\sum_d\mu_n^{(d)}(k)=\mu_n(k)$. 
Hence, exactly $\mu_n(k)$ packets will be served from $\{Q_n^{(d)}(k)\}_{d=-1}^{D_n-1}$, resulting in $\hat{Q}_n(k+1)=Q_n^{\text{sum}}(k+1)$. On the other hand, suppose $\tilde{\mu}_n(k)=\hat{Q}_n(k)$. 
This means that there is enough service opportunities to clear the awaiting packets. In this case, since the scheduling policy is fully-efficient,  exactly $\hat{Q}_n(k)$ packets will be served. Thus,  in both cases,  we have $\hat{Q}_n(k+1)=Q_n^{\text{sum}}(k+1)=A_n(k+D_n)$. This completes the proof of Theorem  \ref{theorem:sample-path}. 
\end{proof}

\vspace{-.1in}
\section*{Appendix B -- Proof of Theorem \ref{theorem:dist-shift}}
Here we prove  Theorem \ref{theorem:dist-shift}. 
\begin{proof}
From Theorem \ref{theorem:sample-path}, we see that $Q_n^{\text{sum}}(t) = \hat{Q}_n(t)$ for all time. Hence, if the two queueing systems use the same queueing discipline in choosing what packets to serve, then every packet will experience the exact same delay in $\hat{Q}_n(t)$ and in $\{Q_n^{(d)}(t)\}_{d=-1}^{D_n-1}$. 

However, in $Q_n^{\text{sum}}(t)$, a packet will enter the actual system only after spending one unit of time in each of the queues in $\{Q_n^{(d)}(k)\}_{d=0}^{D_n-1}$, which takes exactly $D_n$ slots in total. Thus, any packet experiencing a $k$-slot delay will experience $[k-D_n]^+$  delay in $Q_n^{(-1)}(t)$. This completes the proof of Theorem \ref{theorem:dist-shift}. 
\end{proof}

\vspace{-.1in}
\section*{Appendix C -- Proof of Theorem \ref{theorem:delay-red}}
We prove Theorem \ref{theorem:delay-red} here. 
\begin{proof}
Using Corollary \ref{coro:dist-shift}, we see that in the predictive system, the average system backlog size is given by: 
\begin{eqnarray}
N^{\text{P}}_{\text{tot}} = \sum_{n=1}^N\lambda_n \sum_{k\geq0}k\pi_{nk+D_n}. \label{eq:q-predict}
\end{eqnarray}
On the other hand, the average system backlog without prediction is given by: 
\begin{eqnarray}
N_{\text{tot}} = \sum_{n=1}^N\lambda_n \sum_{k\geq0}k\pi_{nk}. \label{eq:q-original}
\end{eqnarray}
Using (\ref{eq:q-original}) and (\ref{eq:q-predict}), we conclude that: 
\begin{eqnarray} 
N_{\text{tot}} - N^{\text{P}}_{\text{tot}}  = \sum_{n=1}^N\lambda_n\bigg(  \sum_{1\leq k\leq D_n} k \pi_{nk} + D_n \sum_{k\geq1} \pi_{nk+D_n}  \bigg). 
\end{eqnarray}
Using Little's theorem and dividing both sides by $\sum_n\lambda_n$, we see that (\ref{eq:delay-reduction}) follows. 

Now we prove (\ref{eq:limit-r}). By taking a limit as $\bv{D}\rightarrow\infty$, we first obtain: 
\begin{eqnarray}
\hspace{-.2in}&&\lim_{\bv{D}\rightarrow\infty}  \sum_{n}\lambda_n\sum_{1\leq k\leq D_n} k \pi_{nk} = N_{\text{tot}}. 
\end{eqnarray}
Then, using the fact that $W_{\text{tot}}<\infty$, we have: 
\begin{eqnarray}
\lim_{D_n\rightarrow\infty} D_n \sum_{k\geq1} \pi_{nk+D_n} = 0, \,\,\forall\,\,n. 
\end{eqnarray}
Using the above in (\ref{eq:delay-reduction}), we see that the corollary follows. 
\end{proof}

\section*{Appendix D -- Proof of Theorem \ref{theorem:opt-remains}}
In this section, we prove Theorem \ref{theorem:opt-remains} using a similar argument as in \cite{neelyenergy}. 
\begin{proof}
We first see that since any policy without prediction is also a  feasible policy for the predictive system, $f_{\text{av}}^{\bv{D}*} \leq f_{\text{av}}^{*}$ by definition. 

We now prove that $f_{\text{av}}^{\bv{D}*} \geq f_{\text{av}}^{*}$. 
Consider any predictive scheduling scheme $\Pi_P$ that ensures system stability.  Consider the set of slots $t\in\{0, ..., M\}$. Let $\script{T}_{s_i}(M)$ denote the set of slots with $\bv{S}(t)=s_i$ and let $T_{s_i}(M)$ denote its cardinality. We also define the conditional empirical average of transmission rate and power cost as follows: 
\begin{eqnarray}
\hspace{-.3in}&&(\overline{\mu}^{(s_i)}_1(M), ..., \overline{\mu}^{(s_i)}_N(M), \overline{f}^{(s_i)}(M))\\
\hspace{-.3in}&&\quad \triangleq\sum_{t\in \script{T}_{s_i}(M) } \frac{(\mu_1(s_i, \bv{P}(t)), ..., \mu_N(s_i, \bv{P}(t)), f(s_i, \bv{P}(t)) ) }{  T_{s_i}(M) }. \nonumber
\end{eqnarray}
The above is indeed a mapping from the $N$-dimensional power vector space into the $N+1$ dimensional space, and that the right-hand-side is  a convex combination fo the points in the $N+1$ dimensional space. Hence, using an argument based on Caratheodory's theorem as in \cite{neelyenergy}, one can show that for every $M$, there exists probabilities $\{a_m^{(s_i)}(M)\}_{m=1}^{N+2}$ and power allocation vectors $\{\bv{P}_m^{(s_i)}(M)\}_{m=1}^{N+2}$ such that: 
\begin{eqnarray*}
\overline{\mu}_n^{(s_i)}(M) &=& \sum_{m=1}^{N+2}a_m^{(s_i)}(M)\mu_n(s_i, \bv{P}^{(s_i)}_m(M)), \,\,\forall\,n, \\
\overline{f}^{(s_i)}(M) & = & \sum_{m=1}^{N+2}a_m^{(s_i)}(M)f(s_i, \bv{P}^{(s_i)}_m(M)). 
\end{eqnarray*}
Now  define: 
\begin{eqnarray*}
\hspace{-.3in}&&(\overline{\mu}_1(M), ..., \overline{\mu}_N(M), \overline{f}(M))\\
\hspace{-.3in}&&\qquad\qquad \triangleq \sum_{s_i}\frac{ T_{s_i}(M) }{M} (\overline{\mu}^{(s_i)}_1(M), ..., \overline{\mu}^{(s_i)}_N(M), \overline{f}^{(s_i)}(M)). 
\end{eqnarray*}
Using the ergodicity of the channel state process, the continuity of $f(s_i, \bv{P}(t))$ and $\mu_n(s_i, \bv{P}(t))$, and the compactness of $\script{P}^{(s_i)}$, one can find a sequence of times $\{M_i\}_{i=1}^{\infty}$ and a set of limiting probabilities $\{a_m^{(s_i)}\}_{m=1}^{N+2}$ and power vectors $\{ \bv{P}_m^{(s_i)}\}_{m=1}^{N+2}$ such that: 
\begin{eqnarray}
f_{\text{av}}^{\Pi_P} &=& \sum_{s_i}\pi_{s_i}\sum_{m=1}^{N+2}a_m^{(s_i)}f(s_i, \bv{P}^{(s_i)}_m),\label{eq:cost-timeshare}\\
\overline{\mu}_{n}^{\Pi_P} &=& \sum_{s_i}\pi_{s_i}\sum_{m=1}^{N+2}a_m^{(s_i)}\mu_n(s_i, \bv{P}^{(s_i)}_m), \,\,\forall\,\, n. \label{eq:rate-timeshare}
\end{eqnarray}
Here $f_{\text{av}}^{\Pi_P}$ denotes the average cost under the scheme $\Pi_P$ and $\overline{\mu}_{n}^{\Pi_P}$ denotes the average \emph{total} allocated transmission rate to queue $n$ under $\Pi_P$. This shows that the average cost and the average allocated rate to any queue under a predictive scheme can be achieved by some randomized schemes. 

Now consider any user $n$. Let $\beta_n^{(d)}(t)$ be the number of packets that enter  $Q_n^{(d)}(t)$ at time $t$ and let $\mu_n^{(d)}(t)$ denote the service rate allocated to serve the packets in $Q_n^{(d)}(t)$ at time $t$. Further let $\eta_n^{(d)}(t)$ be the number of packets served from $Q_n^{(d)}(t)$ at time $t$. 
Then, denote $\beta_n^{(d)}$, $\mu^{(d)}_n$, and $\eta_n^{(d)}$ their average values, i.e., 
$\beta_n^{(d)} =  \lim_{T\rightarrow\infty}\frac{1}{T}\sum_{t=0}^{T-1} \expect{\beta_{n}^{(d)}(t)}$, $\mu_n^{(d)} =  \lim_{T\rightarrow\infty}\frac{1}{T}\sum_{t=0}^{T-1} \expect{\mu_{n}^{(d)}(t)}$, $\eta_n^{(d)} =  \lim_{T\rightarrow\infty}\frac{1}{T}\sum_{t=0}^{T-1} \expect{\eta_{n}^{(d)}(t)}$. \footnote{Here we assume these limits exist. Note that since $A_n(t)$, $\eta_n^{(d)}(t)$ and $\mu_n^{(d)}(t)$ are all bounded, they are equal to the sample path limits with probability $1$. }
 
Using the queueing dynamics of $\{Q_n^{(d)}(t)\}_{d=-1}^{D_n-1}$, we have: 
\begin{eqnarray}
\beta_n^{(d)} - \beta_n^{(d-1)} = \eta_n^{(d)}, \,\,\forall\, d=D_n-1, ..., 0. \label{eq:pre-queue-ineq}
\end{eqnarray}
Because $\eta_n^{(d)}(t)\leq \mu_n^{(d)}(t)$ and $\sum_{d}\mu_n^{(d)}(t)=\mu_n(t)$ for all time, we have:  
\begin{eqnarray}
\eta_n^{(d)} \leq \mu_n^{(d)}, \,\,\sum_d\mu_n^{(d)}\leq\overline{\mu}_n^{\Pi_P}.  \label{eq:rate-ineq}
\end{eqnarray}
Since the system is stable, i.e., $Q^{(-1)}_n(t)$ is stable, we must have: 
\begin{eqnarray}
\beta_n^{(-1)}\leq\eta_n^{(-1)}\leq\mu_n^{(-1)}. \label{eq:q0-ineq}
\end{eqnarray}
Summing (\ref{eq:q0-ineq}) and (\ref{eq:pre-queue-ineq}) over $d=-1, ..., D_n-1$, using (\ref{eq:rate-ineq}), and using $\beta_n^{D_n-1}=\lambda_n$, we conclude that: 
\begin{eqnarray*} 
\lambda_n\leq\sum_{d=-1}^{D_n-1}\eta_n^{(d)}\leq \sum_{d=-1}^{D_n-1}\mu_n^{(d)}\leq \overline{\mu}_n^{\Pi_P}. 
\end{eqnarray*}
This shows that for any stabilizing predictive policy, one can find an equivalent stationary and randomized scheduling policy, which results in the same cost that can be expressed as (\ref{eq:opt-noprediction}), and generates the same service rates that must satisfy the constraint (\ref{eq:cond}). Since $f_{\text{av}}^*$ is defined to be the minimum cost over the entire class of such stationary and randomized schemes, we conclude that $f_{\text{av}}^{\bv{D}*}\geq f_{\text{av}}^*$. 
\end{proof}

\vspace{-.1in}
 \section*{Appendix E -- Proof of Theorem \ref{theorem:pbp}}
 Here we prove Theorem \ref{theorem:pbp}. 
 \begin{proof}
 To prove the results, we consider an auxiliary system that has the exact same setting,  and the same arrival and channel state processes, except that the arrival process for queue $n$ is  given by $\tilde{A} _n(t)=A_n(t+D_n)$ and the initial queue size is given by $\tilde{Q}_n(0)=\sum_{t=0}^{D_n-1}A_n(t)$ for all $n$. Here $\tilde{Q}_n(t)$ denotes the size of queue $n$ in this auxiliary system. Note here $\tilde{Q}_n(0)\leq D_nA_{\max}$. 
 
Then, applying the \textsf{BP} algorithm with FIFO to this system and using Theorem \ref{theorem:sample-path}, we see that at every time instance, 
 \begin{eqnarray}
\tilde{Q}_n(t) = Q^{\text{sum}}_n(t). \label{eq:q-equal}
\end{eqnarray}
Therefore, the \textsf{BP} algorithm in the auxiliary system will choose the exact same control actions as \textsf{PBP} in the actual system. Since both systems have the  same arrival and channel state processes, the two systems will evolve identically. 
Thus, the average power cost and the average queue size will be the same in both systems. Hence, Theorem  \ref{theorem:pbp} follows from Theorem \ref{theorem:bp}. 
 \end{proof}

\section*{Appendix F -- Proof of Theorem \ref{theorem:pbp-delay}}
We prove Theorem \ref{theorem:pbp-delay}. 
\begin{proof}
We prove the results with Little's theorem.  The main idea is to show that the average system queue length is roughly reduced by $\sum_n\lambda_n D_n$. To prove this, we will show that the average total service rate allocated to the prediction queues is $O(\frac{1}{V})$. Then, the average rate of the packets that go through $\{Q_n^{(d)}(t)\}_{d=1}^{D_n}$ will roughly be $\lambda_n$, and so the average queue size is reduced by $\sum_n\lambda_nD_n$. 

First, using (\ref{eq:q-equal-thm}) and (\ref{eq:prob_pmr_special}), we see that in steady state, 
\begin{eqnarray*}
\prob{|Q_n^{\text{sum}}(t) - \gamma_{n}^*|>G+Km}\leq ce^{-m}. 
\end{eqnarray*}
Using the fact that $Q_n^{\text{sum}}(t) = \sum_{d=-1}^{D_n-1}Q_n^{(d)}(t)$, we have: 
\begin{eqnarray*}
\prob{ Q_n^{(-1)}(t) < \gamma_{n}^*- G-Km - \sum_{d=0}^{D_n-1}Q_n^{(d)}(t)}\leq ce^{-m}. 
\end{eqnarray*}
Now let $m=(\log(V))^2$. Since $\gamma_{n}^*=\Theta(V)$, we see that when $V$ is sufficiently large, we have: 
\begin{eqnarray}
\hspace{-.3in}&&\gamma_{n}^*- G-Km - \sum_dQ_n^{(d)}(t)\nonumber\\  
\hspace{-.3in}&&\quad = \Theta(V) - G-K(\log(V))^2 -  \sum_dQ_n^{(d)}(t) \nonumber\\
\hspace{-.3in}&&\quad\stackrel{(a)}{\geq} \Theta(V) - G-K(\log(V))^2 - D_nA_{\max} \nonumber\\
\hspace{-.3in}&&\quad\stackrel{(b)}{\geq} \mu_{\max}. \label{eq:bdd-0}
\end{eqnarray}
In (a) we use  the fact that $Q_n^{(d)}(t)\leq A_{\max}$ for all $1\leq d\leq D_n$, and in (b) we use the fact that $V$ is sufficiently large and $D_n=O(\frac{1}{A_{\max}}[\gamma_{n}^*-G-K(\log(V))^2 - \mu_{\max}]^+)$ for all $n$. This shows that the probability for $Q^{(-1)}_n(t)$ to go below $\mu_{\max}$ is at most $ce^{-(\log(V))^2}=\frac{c}{V^{\log(V)}}$. 

Using the fact that under the FIFO queueing discipline, a prediction queue $Q_n^{(d)}(t)$ will be served only when $Q^{(-1)}_n(t)<\mu_{\max}$. We conclude that the average service rate allocated to the prediction queues is no more than $\frac{c\mu_{\max}}{V^{\log(V)}}$. Hence, the average traffic rate of the packets that traverse all prediction queues and eventually enter $Q^{(-1)}_n(t)$ is at least $[\lambda_n - \frac{c\mu_{\max}}{V^{\log(V)}}]^+$. Since every packet stays $1$ slot in every prediction queue. Using Little's theorem, we conclude that the average size of the prediction queues, denoted by $\sum_{d=0}^{D_n-1}\overline{Q}_n^{(d)}$ satisfies: $\sum_{d=0}^{D_n-1}\overline{Q}_n^{(d)}\geq [\lambda_n - \frac{c\mu_{\max}}{V^{\log(V)}}]^+D_n$. Hence, (\ref{eq:q-av-0}) follows. 
\end{proof}

$\vspace{-.1in}$
\bibliographystyle{unsrt}
\bibliography{../mybib}

\end{document}